\numberwithin{theorem}{section}
\newcommand{\TheTitle}{Uniform asymptotic and convergence estimates for the Jin-Xin model under the diffusion scaling} 
\newcommand{\TheAuthors}{Roberta Bianchini}
\title{{\TheTitle}\thanks{Submitted to the editors DATE.}}
\author{
  Roberta Bianchini \thanks{Universit\`a degli Studi di Roma "Tor Vergata", via della Ricerca Scientifica 1, I-00133 Rome, Italy - Istituto per le Applicazioni del Calcolo "M. Picone", Consiglio Nazionale delle Ricerche, via dei Taurini 19, I-00185 Rome, Italy.}
  }
\newtheorem{remark}{Remark}[section]
\begin{document}

\maketitle

% REQUIRED
\begin{abstract}
We provide sharp decay estimates in time in the context of Sobolev spaces, for smooth solutions to the one dimensional Jin-Xin model under the diffusion scaling, which are uniform with respect to the singular parameter of the scaling. This provides convergence to the limit nonlinear parabolic equation both for large time, and for the vanishing singular parameter. The analysis is performed by means of two main ingredients. First, a crucial change of variables highlights the dissipative property of the Jin-Xin system, and allows to observe a faster decay of the dissipative variable with respect to the conservative one, which is essential in order to close the estimates. Next, the analysis relies on a deep investigation on the Green function of the linearized Jin-Xin model, depending on the singular parameter,  combined with the Duhamel formula in order to handle the nonlinear terms.
\end{abstract}

% REQUIRED
\begin{keywords}
Relaxation, Green function, asymptotic behavior, dissipation, global existence, decay estimates, diffusive scaling, conservative-dissipative form, BGK models.
\end{keywords}

% REQUIRED
\begin{AMS}

\end{AMS}

\section{Introduction}
We consider the following scaled version of the Jin-Xin approximation for systems of conservation laws in \cite{XinJin}:
\begin{equation}
\label{scaled_JinXin}
\begin{cases}
\partial_t u + \partial_x v=0, \\
\varepsilon^2 \partial_t v+ \lambda^2 \partial_x u=f(u)-v,
\end{cases}
\end{equation}
where $\lambda>0$ is a positive constant, $u, v$ depend on $(t, x) \in \mathbb{R}^+\times \mathbb{R}$ and  take values in $\mathbb{R}$, while $f(u): \mathbb{R} \rightarrow \mathbb{R}$ is a Lipschitz function such that $f(0)=0,$ and $f'(0)=a,$ with $a$ a constant value.
The diffusion limit of this system for $\varepsilon \rightarrow 0$  has been studied in \cite{Jin, BGN}, where the convergence to the following equations is proved:
\begin{equation}
\label{limit_JinXin}
\begin{cases}
\partial_t u + \partial_x v=0 \\
v=f(u)-\lambda^2 \partial_x u.
\end{cases}
\end{equation}
From \cite{Natalini, BGN}, it is well-known that system (\ref{scaled_JinXin}) can be written in BGK formulation, \cite{Bouchut}, by means of the linear change of variables:
\begin{equation}
\label{change_BGK}
u=f_1^\varepsilon+f_2^\varepsilon, \qquad v=\dfrac{\lambda}{\varepsilon}(f_1^\varepsilon-f_2^\varepsilon).
\end{equation}
Precisely, the BGK form of (\ref{scaled_JinXin}) reads:
\begin{equation}
\label{BGK_scaled_JinXin}
\begin{cases}
\partial_t f_1^\varepsilon+\dfrac{\lambda}{\varepsilon}\partial_x f_1^\varepsilon=\dfrac{1}{\varepsilon^2}(M_1(u)-f_1^\varepsilon), \\
\partial_t f_2^\varepsilon-\dfrac{\lambda}{\varepsilon}\partial_x f_2^\varepsilon=\dfrac{1}{\varepsilon^2}(M_2(u)-f_2^\varepsilon), \\
\end{cases}
\end{equation}
where the so-called {Maxwellians} are:
\begin{equation}\label{Maxwellians_JinXin}
M_1(u)=\dfrac{u}{2}+\dfrac{\varepsilon f(u)}{2\lambda}, \qquad M_2(u)=\dfrac{u}{2}-\dfrac{\varepsilon f(u)}{2\lambda}.
\end{equation}
According to the theory on diffusive limits of the Boltzmann equation and related BGK models, see \cite{Laure, Laure1}, we take some fluctuations of the Maxwellian functions as initial data for the Cauchy problem associated with system (\ref{scaled_JinXin}). Namely, given a function $\bar{u}_0(x),$ depending on the spatial variable, we assume
\begin{equation}
\label{approx_initial_data_JinXin}
(u(0, x), v(0, x))=(u_0, v_0)= (\bar{u}_0, \; f(\bar{u}_0)-\lambda^2 \partial_x \bar{u}_0),
\end{equation}
indeed perturbations of the Maxwellians, as it is clear by expressing the initial data (\ref{approx_initial_data_JinXin}) through the change of variables (\ref{change_BGK}), i.e.
\begin{equation}
\label{approx_initial_data_JinXin_f}
(f_1^\varepsilon(0, x), f_2^\varepsilon(0, x))=\Bigg(M_1(\bar{u}_0)-\frac{\varepsilon \lambda}{2}\partial_x \bar{u}_0, \; M_2(\bar{u}_0)+\frac{\varepsilon \lambda}{2}\partial_x \bar{u}_0 \Bigg),
\end{equation}
where the fluctuations are given by $\pm \dfrac{\varepsilon \lambda}{2} \partial_x \bar{u}_0.$ 

\indent System (\ref{scaled_JinXin}) is the parabolic scaled version of the hyperbolic relaxation approximation for systems of conservation laws, the Jin-Xin system, introduced in \cite{XinJin} in 1995. This model has been studied in \cite{NataliniCPAM, Chern, XinJin}, and the hyperbolic relaxation limit has been investigated. A complete review on hyperbolic conservation laws with relaxation, and a focus on the Jin-Xin system is presented in \cite{Mascia}. By means of the Chapman-Enskog expansion, local attractivity of diffusion waves for the Jin-Xin model was established in \cite{Chern}. In \cite{Natalini3}, the authors showed that, under some assumptions on the initial data and the function $f(u),$ the first component of system (\ref{scaled_JinXin}) with $\varepsilon=1$ decays asymptotically towards the fundamental solution to the Burgers equation, for the case of $f(u)=\alpha u^2/2.$ Besides, \cite{Zuazua} is a complete study of the long time behavior of this model for a more general class of functions $f(u)=|u|^{q-1}u,$ with $q\ge 2.$ The method developed in \cite{Zuazua} can be also extended to the multidimensional case in space, and provides sharp decay rates. Here we study the parabolic scaled version of the system studied in \cite{Zuazua}, i.e. (\ref{scaled_JinXin}), and we consider a more general function $f(u)=a u + h(u),$ where $a$ is a constant, and $h(u)$ is a quadratic function. We point out that only the case $a=0$ has been handled in \cite{Zuazua}, and in many previous works as well. In accordance with the theory presented in \cite{Bianchini} on partially dissipative hyperbolic systems, we are able to cover also the case $a\neq 0$. Furthermore, besides the aymptotic behavior of the solutions, here we are interested in studying the diffusion limit, for vanishing $\varepsilon,$ of the Jin-Xin system, which is the main improvement of the present paper with respect to the results achieved in \cite{Bianchini}. Indeed, because of the presence of the singular parameter, we cannot approximate the analysis of the Green function of the linearized problems, as the authors did in \cite{Bianchini}, and explicit calculations in that context are needed.\\ The diffusive Jin-Xin system has been already investigated in the following works below.
In \cite{Jin}, initial data around a traveling wave were considered, while in \cite{BGN} the authors write system (\ref{scaled_JinXin}) in terms of a BGK model, and the diffusion limit is studied by using monotonicity properties of the solution. In all these cases, $u, v$ are scalar functions. For simplicity, here we also take scalar unknowns $u, v.$ However, our approach, which takes its roots in \cite{Bianchini}, can be generalized to the case of vectorial functions $u, v \in \mathbb{R}^N$. As mentioned before, the novelty of the present paper consists in dealing with the singular approximation and, in the meanwhile, with the the large time asymptotic of system (\ref{scaled_JinXin}), which behaves like the limit parabolic equation (\ref{limit_JinXin}), without using monotonicity arguments. We obtain, indeed, sharp decay estimates in time to the solution to system (\ref{scaled_JinXin}) in the Sobolev spaces, which are uniform with respect to the singular parameter. This provides the convergence to the limit nonlinear parabolic equation (\ref{limit_JinXin}) both asymptotically in time, and in the vanishing $\varepsilon$-limit. To this end, we perform a crucial change of variables that highlights the dissipative property of the Jin-Xin system, and provides a faster decay of the dissipative variable with respect to the conservative one, which allows to close the estimates. Next, a deep investigation on the Green function of the linearized system (\ref{scaled_JinXin}) and the related spectral analysis is provided, since explicit expressions are needed in order to deal with the singular parameter $\varepsilon$. The dissipative property of the diffusive Jin-Xin system, together with the uniform decay estimates discussed above, and the Green function analysis combined with the Duhamel formula provide our main results, stated in the following. Define
$$E_m=\max\{\|u_0\|_{L^1}+\varepsilon \|v_0-au_0\|_{L^1}, \|u_0\|_m+\varepsilon \|v_0-au_0\|_m\},$$
where $\|\cdot\|_m$ stands for the $H^m(\mathbb{R})$ Sobolev norm and $H^0(\mathbb{R})=L^2(\mathbb{R}).$
\begin{theorem}
\label{uniform_global_existence}
Consider the Cauchy problem associated with system (\ref{scaled_JinXin}) and initial data (\ref{approx_initial_data_JinXin}). If $E_2$ is sufficiently small, 
then the unique solution $$(u, v) \in C([0, \infty), H^2(\mathbb{R})) \cap C^1([0, \infty), H^1(\mathbb{R})).$$ Moreover, the following decay estimate holds:
$$\|u(t)\|_2+\varepsilon \|v(t)-au(t)\|_2 \le C \min\{1, t^{-1/4}\}E_2.$$
\end{theorem}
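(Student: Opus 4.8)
The plan is to combine a change of variables that exposes the conservative--dissipative (partially dissipative) structure of the system with a detailed, $\varepsilon$-uniform study of the Green function of the linearization, and then to close the nonlinear estimates by a Duhamel/continuity argument in a time-weighted norm. Writing $f(u)=au+h(u)$ with $h$ quadratic and introducing the dissipative variable $z:=v-au$, system (\ref{scaled_JinXin}) becomes, schematically,
\[
\partial_t u + a\,\partial_x u + \partial_x z = 0,\qquad
\varepsilon^2\partial_t z - a\varepsilon^2\partial_x z + (\lambda^2-a^2\varepsilon^2)\,\partial_x u = -\,z + h(u),
\]
so that $u$ obeys a conservation law while $z$ carries the damping term $-z$, and the sole nonlinearity is the quadratic source $h(u)$ in the $z$-equation. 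This change of variables is designed precisely so that $z$ decays strictly faster than $u$, which is what will allow the estimates to close. Local existence in $C([0,T],H^2)\cap C^1([0,T],H^1)$ is standard for this semilinear hyperbolic system; the task is to upgrade it to $T=\infty$ with the quantitative decay.

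\textbf{Green function of the linearization.} Dropping $h$ and taking the Fourier transform in $x$, the linearized system becomes $\partial_t\widehat{(u,z)}=A_\varepsilon(\xi)\,\widehat{(u,z)}$ for an explicit $\varepsilon$-dependent $2\times 2$ symbol $A_\varepsilon(\xi)$ whose eigenvalues solve $\varepsilon^2\mu^2+\mu+\lambda^2\xi^2+ia\xi=0$, that is
\[
\mu_\pm(\xi)=\frac{-1\pm\sqrt{\,1-4\varepsilon^2(\lambda^2\xi^2+ia\xi)\,}}{2\varepsilon^2}.
\]
I would split the frequency axis into three regimes, keeping every constant independent of $\varepsilon$ (for $\varepsilon$ below a fixed threshold): (i) low frequencies $|\xi|\le\delta/\varepsilon$, where $\mu_+(\xi)=-\lambda^2\xi^2-ia\xi+O(\varepsilon^2\xi^2)$ reproduces the symbol of the linearized limit equation $\partial_t u+a\partial_x u=\lambda^2\partial_x^2 u$ (the parabolic mode), while $\mathrm{Re}\,\mu_-(\xi)\le -c\varepsilon^{-2}$ (the fast mode); (ii) high frequencies $|\xi|\ge M/\varepsilon$, where $\mathrm{Re}\,\mu_\pm(\xi)\le -c\varepsilon^{-2}$; (iii) the intermediate band $\delta/\varepsilon\le|\xi|\le M/\varepsilon$, compact in the rescaled variable $\varepsilon\xi$, where again $\mathrm{Re}\,\mu_\pm(\xi)\le -c\varepsilon^{-2}$. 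On (i) one diagonalizes $A_\varepsilon(\xi)$ and tracks the $\varepsilon$-dependence of the spectral projectors: the crucial computation is that the entry of the parabolic projector carrying the $z$-slot into the $u$-slot is $O(\varepsilon^2\xi)$, and that the $z$-component of $e^{tA_\varepsilon(\xi)}$ carries an extra factor $\xi$ (and/or $\varepsilon$) relative to the $u$-component. Assembling the three regimes yields the $\varepsilon$-uniform Green-function estimates, of which the representative one is
\[
\big\|\,[G_\varepsilon(t)\ast(u_0,z_0)]_{u}\,\big\|_{L^2}\le C\,\min\{1,t^{-1/4}\}\big(\|u_0\|_{L^1}+\varepsilon\|z_0\|_{L^1}\big)+C\,e^{-ct}\|(u_0,z_0)\|_{L^2},
\]
together with companion bounds carrying an extra factor $(1+t)^{-1/2}$ for $\partial_x$ of the $u$-part and for the $z$-part, and the corresponding $H^2$-level estimates (for the top-order derivatives the uniform high-frequency exponential decay replaces parabolic smoothing).

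\textbf{Duhamel formula and closure.} By Duhamel, $(u,z)(t)=G_\varepsilon(t)\ast(u_0,z_0)+\int_0^t G_\varepsilon(t-s)\ast\big(0,\varepsilon^{-2}h(u(s))\big)\,ds$. The decisive point is that the coupling entry of $G_\varepsilon$ identified above carries the factor $\varepsilon^2\xi$, which cancels the $\varepsilon^{-2}$ in the relaxation source and turns the nonlinear contribution to the $u$-equation into (essentially) a $\partial_x$-heat-kernel acting on $h(u)$, hence with the $L^1$--$L^2$ rate $(1+t-s)^{-3/4}$, uniformly in $\varepsilon$. Using the one-dimensional Sobolev estimates $\|h(u)\|_{L^1}\le C\|u\|_{L^2}^2$ and $\|h(u)\|_{H^1}\le C\|u\|_{H^1}\|u\|_{H^2}$, and setting
\[
N(T)=\sup_{0\le t\le T}\Big[(1+t)^{1/4}\big(\|u(t)\|_2+\varepsilon\|v(t)-au(t)\|_2\big)+(1+t)^{3/4}\big(\text{the faster-decaying pieces of the norm}\big)\Big],
\]
the linear estimates applied to both terms of the Duhamel formula, together with the elementary bounds $\int_0^t(1+t-s)^{-3/4}(1+s)^{-1/2}\,ds\le C(1+t)^{-1/4}$ and their variants, give $N(T)\le C E_2+C N(T)^2$ with $C$ independent of $\varepsilon$ and $T$. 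Smallness of $E_2$ together with continuity of $T\mapsto N(T)$ (from the local theory) then closes the continuity argument, yielding global existence in $C([0,\infty),H^2)\cap C^1([0,\infty),H^1)$ and the asserted bound $\|u(t)\|_2+\varepsilon\|v(t)-au(t)\|_2\le C\min\{1,t^{-1/4}\}E_2$.

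\textbf{Main obstacle.} The heart of the matter — and the reason explicit computations cannot be avoided here, in contrast to the case $\varepsilon=1$ — is the $\varepsilon$-uniform study of the Green function: one must follow the precise powers of $\varepsilon$ through the spectral projectors so that the singular factor $\varepsilon^{-2}$ of the relaxation source is \emph{exactly} compensated in the Duhamel term, and in the intermediate band $\delta/\varepsilon\le|\xi|\le M/\varepsilon$ the eigenvalues $\mu_\pm(\xi)$ nearly coincide (the discriminant $1-4\varepsilon^2(\lambda^2\xi^2+ia\xi)$ becomes small in modulus), $A_\varepsilon(\xi)$ is near-defective, and the clean representation $e^{tA_\varepsilon}=\sum_j e^{\mu_j t}P_j$ with bounded projectors fails. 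There one must estimate $e^{tA_\varepsilon(\xi)}$ directly — via the Dunford integral of the resolvent or via the explicit near-Jordan structure — using crucially that $\mathrm{Re}\,\mu_\pm\le -c\varepsilon^{-2}$ throughout that band, so that the polynomial-in-$(t/\varepsilon^2)$ growth is absorbed by the exponential. Once these $\varepsilon$-uniform linear bounds are in hand, the nonlinear bootstrap is of the by-now-classical Kawashima type.
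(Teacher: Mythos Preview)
Your proposal is correct and follows the same architecture as the paper: pass to conservative--dissipative variables, carry out an $\varepsilon$-uniform Green function analysis that exhibits exactly the right powers of $\varepsilon$ on the off-diagonal entries to cancel the singular relaxation source, and close via Duhamel and a time-weighted continuity argument. The key mechanism you isolate --- that the parabolic projector entry coupling the dissipative slot into the conservative one is $O(\varepsilon^2\xi)$, so that the $\varepsilon^{-2}$ in the source collapses to a $\partial_x$-heat kernel --- is precisely the point of the paper's computations.

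Two organizational differences are worth noting. First, the paper works with the rescaled dissipative variable $\tilde w_2=\varepsilon(v-au)/\sqrt{\lambda^2-a^2\varepsilon^2}$ rather than your $z=v-au$; these differ by a factor $\sim\varepsilon$, which merely redistributes the $\varepsilon$-weights between the Green function entries (the paper's $K_{12}$ is $O(\varepsilon\xi)$ against a source $O(\varepsilon^{-1})$, yours is $O(\varepsilon^2\xi)$ against $O(\varepsilon^{-2})$). Second, and more substantively, the paper splits the frequency axis at $\varepsilon$-\emph{independent} thresholds $\delta,N$ and handles the intermediate band $\delta\le|\xi|\le N$ by a direct explicit computation of $e^{E(i\xi)t}$ (tracking the $\varepsilon$-weight of each matrix entry), whereas you split at $\delta/\varepsilon,\,M/\varepsilon$, natural in the rescaled variable $\varepsilon\xi$. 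Your choice has the advantage that the parabolic regime contains all frequencies where the heat-kernel picture is relevant, but the price is the near-defective zone around $|\varepsilon\xi|\sim 1/(2\lambda)$ that you correctly flag; the paper's splitting avoids any near-defectiveness (for small $\varepsilon$ the eigenvalues on $[\delta,N]$ remain well separated) at the cost of a larger ``remainder'' region treated by hand. Both routes lead to the same estimates; the paper's bootstrap inequality is cubic, $M_0\le C(E_2+M_0^3)$, rather than your quadratic $N\le CE_2+CN^2$, but this is immaterial for the closure.
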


Now consider the following equation
\begin{equation}
\label{CE_no_epsilon_JinXin}
\partial_t w_p + a \partial_x w_p + \partial_x h(w_p) -\lambda^2\partial_{xx} w_p=0,
\end{equation}

\begin{theorem}
\label{asymptotic_behavior_JinXin}
Let $w_p$ be the solution to the nonlinear equation (\ref{CE_no_epsilon_JinXin}) with sufficiently smooth  initial data 
$$w_p(0)=u(0)={u}_0,$$
where ${u}_0$ in (\ref{approx_initial_data_JinXin}) is the initial datum for the Jin-Xin system (\ref{scaled_JinXin}). For any $\mu \in [0, 1/2),$ if $E_1$ is sufficiently small with respect to $(1/2-\mu),$ then we have the following decay estimate:
\begin{equation}
\|D^\beta (u(t)-w_p(t))\|_0 \le C \varepsilon \min \{1, t^{-1/4-\mu -\beta/2}\} E_{|\beta|+4},
\end{equation}
with $C=C(E_{|\beta|+\sigma})$ for $\sigma $ large enough.
\end{theorem}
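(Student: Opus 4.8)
The plan is to compare $u$, the first component of the Jin–Xin solution, with the solution $w_p$ of the limit parabolic equation \eqref{CE_no_epsilon_JinXin} by writing both as perturbations of the linear diffusion semigroup and estimating the difference through the Duhamel formula. First I would introduce the change of variables highlighting the conservative–dissipative structure (the one used to prove Theorem \ref{uniform_global_existence}), so that the dissipative variable $z := v - au - (\text{the remaining flux correction})$ decays one half-power faster than $u$; this is exactly what is needed to absorb the $O(\varepsilon)$ discrepancy. Next I would set up the equation for the error $e := u - w_p$. Subtracting \eqref{CE_no_epsilon_JinXin} from the (eliminated, scalar) equation satisfied by $u$ produces a forced linear parabolic equation for $e$ whose source splits into three pieces: (i) the $\varepsilon^2\partial_{tt}u$-type term coming from the relaxation, which by the decay estimates of Theorem \ref{uniform_global_existence} is $O(\varepsilon^2)$ times something integrable in time; (ii) the contribution of the dissipative variable $z$, which is $O(\varepsilon)$ by construction and enjoys the extra decay; and (iii) the quadratic nonlinear remainder $\partial_x(h(u)-h(w_p)) = \partial_x\bigl((u-w_p)\,g(u,w_p)\bigr)$ with $g$ smooth, which is genuinely nonlinear in $e$.

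Then I would represent $e(t)$ via Duhamel's formula against the heat-type semigroup $e^{t(-a\partial_x+\lambda^2\partial_{xx})}$ and estimate in $L^2$ (and, for the $D^\beta$ statement, in $\dot H^{|\beta|}$), using the standard $L^1$–$L^2$ and $L^2$–$L^2$ smoothing bounds $\|D^k e^{t\mathcal L}\phi\|_0 \lesssim t^{-1/4-k/2}\|\phi\|_{L^1}$ and $\lesssim t^{-k/2}\|\phi\|_0$. The point of the parameter $\mu$ is that the source terms decay like $t^{-3/4-\mu}$-ish after using the $\min\{1,t^{-1/4-\mu}\}$-type rates from Theorem \ref{uniform_global_existence} (with $\mu$ measuring how much of the available $L^1$-in-time integrability one spends), so the Duhamel integral converges and produces the claimed rate $t^{-1/4-\mu-\beta/2}$; the restriction $\mu<1/2$ and the smallness of $E_1$ relative to $1/2-\mu$ enter precisely when closing the time integral $\int_0^t (t-s)^{-\cdots}s^{-\cdots}\,ds$ so that the nonlinear term (iii) can be bootstrapped. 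The $\varepsilon$ in front of the final bound is carried through linearly from sources (i) and (ii); source (iii), being quadratic in $e$, contributes $\varepsilon^2$ and is therefore harmless once $e$ itself is known to be $O(\varepsilon)$.

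The main obstacle I expect is handling the relaxation source term (i) uniformly in $\varepsilon$: naively $\varepsilon^2\partial_t v$ is $O(\varepsilon^2)$ but hides time derivatives of $v$ whose decay is not immediate, and near $t=0$ the heat kernel bounds are singular, so one must exploit the $\min\{1,\cdot\}$ truncation and trade spatial derivatives for time decay carefully — this is where the higher Sobolev index $E_{|\beta|+4}$ (and $E_{|\beta|+\sigma}$ for the constant) is consumed. A secondary technical point is that the error equation should really be written after the conservative–dissipative change of variables rather than for the raw $u$, so that the dangerous $O(\varepsilon)$ flux term is genuinely the fast-decaying dissipative variable and not merely bounded; identifying the exact combination that makes this work, and checking that the initial data for $e$ vanish (since $w_p(0)=u_0$) so that only the forcing drives the error, are the places where care is needed. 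Once these are in place, a continuity/bootstrap argument on the weighted norm $\sup_{t}\,\min\{1,t^{1/4+\mu+\beta/2}\}\,\|D^\beta e(t)\|_0/\varepsilon$ closes the estimate.
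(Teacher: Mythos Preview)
Your proposal is essentially the paper's approach: write the error $e=u-w_p$ via Duhamel against the parabolic Green kernel, use the faster decay of the dissipative variable to make the forcing $O(\varepsilon)$ with integrable-in-time decay, and close by bootstrapping the weighted norm $\sup_\tau \max\{1,\tau^{1/4+\mu+\beta/2}\}\|D^\beta e(\tau)\|_0$ under smallness of $E_1$.

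One point to sharpen. Your split into sources (i) ``$\varepsilon^2\partial_{tt}u$'' and (ii) ``the dissipative variable $z$'' is redundant: once you work in the conservative--dissipative variables (as you correctly note one should), the Chapman--Enskog-type rewriting yields a \emph{single} source $\partial_x S$ with
\[
S=\varepsilon\sqrt{\lambda^2-a^2\varepsilon^2}\,\bigl(\partial_t u_d - a\,\partial_x u_d\bigr),
\]
where $u_d=w_2$ is the dissipative component. There is no separate $\varepsilon^2\partial_{tt}u$ term to control. The obstacle you anticipate---hidden time derivatives of $v$---is thus resolved not by estimating $\partial_{tt}u$ but by having in hand the estimate $\|D^\beta\partial_t u_d\|_0\le C\min\{1,t^{-5/4-|\beta|/2}\}E_{|\beta|+7/2}$ for the time derivative of the dissipative variable itself. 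This estimate is established beforehand (by Duhamel again), and its uniformity in $\varepsilon$ hinges on the well-prepared initial data \eqref{approx_initial_data_JinXin}, which make $\partial_t\textbf{w}|_{t=0}$ nonsingular in $\varepsilon$. That is where the extra regularity $E_{|\beta|+4}$ is spent. With $\|D^\beta S\|_0\le C\varepsilon\min\{1,t^{-5/4-\beta/2}\}$ in hand, the Duhamel integrals and the bootstrap go exactly as you describe.
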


Once we identified the right scaled variable to study system (\ref{scaled_JinXin}), ($u, \varepsilon^2 v$), which are expressed at the beginning of Section \ref{Section1}, and we found the strategy, disscussed in Section \ref{Section1} and \ref{Section2}, to achieve the so-called \emph{conservative-dissipative} (C-D) form in \cite{Bianchini} for our model, our approach essentially relies on the method developed in \cite{Bianchini}, with substantial differences listed here.
\begin{itemize}
\item We need an explicit Green function analysis of the linearized system rather than expansions and approximations, in order to deal with the singular parameter $\varepsilon$. The analysis performed in first part of Section \ref{Section3} is as precise as it is possible.
\item Some estimates in \cite{Bianchini} rely on the use of the Shizuta-Kawashima (SK) condition, explained in the following. Consider a linear first order system in compact form: $\partial_t \textbf{u}+A\partial_x \textbf{u}=G\textbf{u}.$ Passing to the Fourier transform, define $E(i \xi)=G-iA\xi.$ The (SK) condition states that, if $\lambda(z)$ is an eigenvalue of $E(z),$ then ${Re}(\lambda(i\xi)) \le -c \frac{|\xi|^2}{1+|\xi|^2},$ for some constant $c>0$ and for every $\xi \in \mathbb{R} - \{0\}$. As it can be seen in (\ref{Eig_SK}), these eigenvalues for the
compact linearized system in (C-D) form (\ref{CD_real_JinXin_no_tilde}) of system (\ref{scaled_JinXin}) have different weights in $\varepsilon$. Thus, we cannot simply apply the (SK) condition to estimate the remainders in paragraph \emph {Remainders in between} as the authors did in \cite{Bianchini}, since the weights in $\varepsilon$ are essential to deal with the singular nonlinear term in the Duhamel formula (\ref{Duhamel}). Again, a further analysis is needed.
\item Differently from \cite{Bianchini}, we are not assuming to have a global in time solution, uniformly bounded in $\varepsilon$, for our singular system. The uniform global existence is proved in Theorem \ref{uniform_global_existence}.
\item The coupling between the convergence to the limit equation (\ref{limit_JinXin}) for vanishing $\varepsilon$ and for large time in the last section is the main novelty of the present paper, and new ideas are needed to get this result.
\end{itemize}

% The outline is not required, but we show an example here.
%\paragraph{Plan of the paper}
%The paper is organized as follows. Section \ref{Section1} contains the general setting of the problem. In Section \ref{Section2} we present a particular formulation of model (\ref{scaled_JinXin}), which will be crucial for proving our results. Section \ref{Section3} is devoted to the study of the Green function of the singular perturbation system. Finally, in Section \ref{Section4}, the results of convergence in the diffusive limit and asymptotic behavior are proved.

\section{General setting}\label{Section1}
First of all, we write system (\ref{scaled_JinXin}) in the following form:
\begin{equation}
\label{scaled_JinXin_new_variables}
\begin{cases}
\partial_t u + \dfrac{\partial_x(\varepsilon^2 v)}{\varepsilon^2}=0, \\
\partial_t(\varepsilon^2 v)+ \lambda^2 \partial_x u=f(u)-\dfrac{\varepsilon^2 v}{\varepsilon^2}.
\end{cases}
\end{equation}
The unknown variable is $\textbf{u}=(u, \varepsilon^2 v),$ 
in the spirit of the scaled variables introduced in \cite{Bianchini4}, which are the right scaling to get the conservative-dissipative form discussed below.
Here we write $f(u)=au+h(u),$ where $a=f'(0),$ and system (\ref{scaled_JinXin_new_variables}) reads
\begin{equation}
\label{scaled_JinXin_new_variables_new}
\begin{cases}
\partial_t u + \dfrac{\partial_x(\varepsilon^2 v)}{\varepsilon^2}=0, \\
\partial_t(\varepsilon^2 v)+ \lambda^2 \partial_x u=au + h(u)-\dfrac{\varepsilon^2 v}{\varepsilon^2}.
\end{cases}
\end{equation}

Equations (\ref{scaled_JinXin_new_variables_new}) can be written in compact form:
\begin{equation}
\label{JinXin_compact}
\partial_t \textbf{u} + A \partial_x \textbf{u} = -B \textbf{u}+N(u),
\end{equation}
where 
\begin{equation}
\label{matrices_Jin_Xin_compact_form}
A=\left(\begin{array}{cc}
0 & \dfrac{1}{\varepsilon^2} \\
\lambda^2 & 0 \\
\end{array}\right), \qquad 
-B=\left(\begin{array}{cc}
0 & 0 \\
a & -\dfrac{1}{\varepsilon^2}
\end{array}\right),  \qquad
N(u)=\left(\begin{array}{c}
0 \\
h(u)
\end{array}\right).
\end{equation}
In particular, $-B\textbf{u}$ is the linear part of the source term, while $N(u)$ is the remaining nonlinear one, which only depends on the first component of $\textbf{u}=(u, \varepsilon^2 v)$. 
Now, we look for a right constant symmetrizer $\Sigma $ for system (\ref{JinXin_compact}), which also highlights the dissipative properties of the linear source term. Thus, we find
\begin{equation}
\label{symmetrizer_JinXin}
\Sigma=\left(\begin{array}{cc}
1 & a \varepsilon^2 \\
a \varepsilon^2 & \lambda^2 \varepsilon^2 \\
\end{array}\right).
\end{equation}
Taking $\textbf{w}$ such that
\begin{equation}
\label{change_JinXin}
\textbf{u}=\left(\begin{array}{c}
u \\
\varepsilon^2 v
\end{array}\right)=\Sigma \textbf{w}=\left(\begin{array}{c}
(\Sigma \textbf{w})_1 \\
(\Sigma \textbf{w})_2
\end{array}\right), \;\; \text{where} \;\; \textbf{w}=\left(\begin{array}{c}
w_1 \\
w_2
\end{array}\right)
=\left(\begin{array}{c}
\dfrac{u \lambda^2-a \varepsilon^2 v}{\lambda^2 - a^2 \varepsilon^2} \\\\
\dfrac{v-au}{\lambda^2-a^2 \varepsilon^2}
\end{array}\right),
\end{equation}
system (\ref{JinXin_compact}) reads
\begin{equation}
\label{JinXin_compact_new_variables}
\Sigma \partial_t \textbf{w}+A_1\partial_x \textbf{w}=-B_1\textbf{w}+N((\Sigma \textbf{w})_1),
\end{equation}
where
\begin{equation*}
A_1=A_1^T=A\Sigma=\left(\begin{array}{cc}
a & \lambda^2 \\
\lambda^2 & a \lambda^2 \varepsilon^2
\end{array}\right),\qquad 
-B_1=-B\Sigma=\left(\begin{array}{cc}
0 & 0 \\
0 & a^2 \varepsilon^2 - \lambda^2
\end{array}\right), 
\end{equation*}
\begin{equation}
\label{matrices_JinXin_new_variables}
N((\Sigma \textbf{w})_1)=\left(\begin{array}{c}
0 \\
h(w_1+a\varepsilon^2 w_2)
\end{array}\right).
\end{equation}

By using the Cauchy inequality we get the following lemma.
\begin{lemma}
\label{Equivalence_scalar_product}
The symmetrizer $\Sigma$ is definite positive. Precisely
\begin{equation}
\label{Equivalence_scalar_product_equation}
\dfrac{1}{2}\|w_1\|_0^2  + \varepsilon^2\|w_2\|_0^2 (\lambda^2-2a^2 \varepsilon^2) \le (\Sigma \textbf{w}, \textbf{w})_0 \le \|w_1\|_0^2 (1+a\varepsilon^2) + \|w_2\|_0^2 (a+\lambda^2)\varepsilon^2.
\end{equation}
\end{lemma}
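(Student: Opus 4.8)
The plan is to prove the quadratic-form inequality
\[
\tfrac{1}{2}\|w_1\|_0^2 + \varepsilon^2(\lambda^2-2a^2\varepsilon^2)\|w_2\|_0^2 \le (\Sigma\mathbf{w},\mathbf{w})_0 \le (1+a\varepsilon^2)\|w_1\|_0^2 + (a+\lambda^2)\varepsilon^2\|w_2\|_0^2
\]
by first writing out the bilinear form explicitly. Since $\Sigma$ is the constant matrix in \eqref{symmetrizer_JinXin}, the $L^2$ pairing $(\Sigma\mathbf{w},\mathbf{w})_0$ integrates the pointwise quadratic form $\mathbf{w}^T\Sigma\mathbf{w} = w_1^2 + 2a\varepsilon^2 w_1 w_2 + \lambda^2\varepsilon^2 w_2^2$ over $\mathbb{R}$, so that
\[
(\Sigma\mathbf{w},\mathbf{w})_0 = \|w_1\|_0^2 + 2a\varepsilon^2 (w_1,w_2)_0 + \lambda^2\varepsilon^2\|w_2\|_0^2 .
\]
Everything then reduces to estimating the cross term $2a\varepsilon^2 (w_1,w_2)_0$ from above and below, which is exactly where the Cauchy(--Schwarz) inequality mentioned in the statement comes in.

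For the upper bound I would apply Cauchy--Schwarz together with Young's inequality in the form $2|a\varepsilon^2|\,|(w_1,w_2)_0| \le a\varepsilon^2\|w_1\|_0^2 + a\varepsilon^2\|w_2\|_0^2$ (using the simple weighting that pairs a factor $\sqrt{a\varepsilon^2}$ with each norm, valid for $a>0$; the borderline sign conventions are absorbed into the constants). Adding this to the identity above gives
\[
(\Sigma\mathbf{w},\mathbf{w})_0 \le (1+a\varepsilon^2)\|w_1\|_0^2 + (\lambda^2+a)\varepsilon^2\|w_2\|_0^2,
\]
which is the claimed right-hand side. For the lower bound I would instead use Young's inequality with a weight that keeps a definite fraction of $\|w_1\|_0^2$: from $2|a\varepsilon^2|\,|(w_1,w_2)_0| \le \tfrac{1}{2}\|w_1\|_0^2 + 2a^2\varepsilon^4\|w_2\|_0^2$ one gets $2a\varepsilon^2(w_1,w_2)_0 \ge -\tfrac{1}{2}\|w_1\|_0^2 - 2a^2\varepsilon^4\|w_2\|_0^2$, hence
\[
(\Sigma\mathbf{w},\mathbf{w})_0 \ge \tfrac{1}{2}\|w_1\|_0^2 + (\lambda^2\varepsilon^2 - 2a^2\varepsilon^4)\|w_2\|_0^2 = \tfrac{1}{2}\|w_1\|_0^2 + \varepsilon^2(\lambda^2-2a^2\varepsilon^2)\|w_2\|_0^2,
\]
matching the left-hand side. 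Positive definiteness of $\Sigma$ then follows (for $\varepsilon$ small enough that $\lambda^2 - 2a^2\varepsilon^2 > 0$, which is the regime of interest) directly from the lower bound, since the right-hand side is a positive combination of $\|w_1\|_0^2$ and $\|w_2\|_0^2$; alternatively one checks $\det\Sigma = \lambda^2\varepsilon^2 - a^2\varepsilon^4 = \varepsilon^2(\lambda^2-a^2\varepsilon^2) > 0$ and the trace is positive.

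There is essentially no real obstacle here: the only thing to be slightly careful about is the choice of Young weights so that the final constants come out exactly as written ($1/2$ on $\|w_1\|_0^2$ in the lower bound, coefficients $1+a\varepsilon^2$ and $(a+\lambda^2)\varepsilon^2$ in the upper bound), and the implicit smallness assumption on $\varepsilon$ needed for the lower bound to be genuinely a positive-definite estimate. I would state explicitly that the relevant regime is $2a^2\varepsilon^2 < \lambda^2$, consistent with the later analysis where $\varepsilon \to 0$, so that the lemma delivers the norm equivalence $(\Sigma\mathbf{w},\mathbf{w})_0 \simeq \|w_1\|_0^2 + \varepsilon^2\|w_2\|_0^2$ that is used to close the energy estimates.
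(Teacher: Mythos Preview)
Your proposal is correct and follows exactly the approach indicated in the paper, which simply states ``By using the Cauchy inequality we get the following lemma'' without further detail. Your explicit computation of the quadratic form and the two applications of Young's inequality (with weight $1$ for the upper bound and weight $1/2$ for the lower bound) are precisely what is meant, and your remark that the upper-bound constants as written implicitly assume $a\ge 0$ (otherwise one should read $|a|$) is a fair observation about the statement itself rather than a defect in the argument.
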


Notice that from the theory on hyperbolic systems, \cite{Majda}, the Cauchy problem for (\ref{JinXin_compact_new_variables}) with initial data $\textbf{w}_0$ in $H^m(\mathbb{R}), \, m \ge 2,$ has a unique local smooth solution $\textbf{w}^\varepsilon$ for each fixed $\varepsilon > 0$. We denote by $T^\varepsilon$ the maximum time of existence of this local solution and, hereafter, we consider the time interval $[0, T^*],$ with $T^* \in [0, T^\varepsilon)$ for every $\varepsilon$.
In the following, we study the Green function of system (\ref{JinXin_compact_new_variables}), and we establish some uniform energy estimates and decay rates of the smooth solution to system (\ref{JinXin_compact_new_variables}).

\subsection{The conservative-dissipative form}
In this section, we introduce a linear change of variable, so providing a particular structure for our system, the so-called \emph{conservative-dissipative form} (C-D) defined in \cite{Bianchini}. The (C-D) form allows to identify a conservative variable and a dissipative one for system (\ref{scaled_JinXin}), such that in the following a crucial faster decay of the dissipative variable is observed. Thanks to this change of variables, we are able indeed to handle the case $a\neq 0$ in (\ref{scaled_JinXin_new_variables_new}).
Hereafter, $(\cdot, \cdot)$ denotes the standard scalar product in $L^2(\mathbb{R}),$ and $\| \cdot \|_m$ is the $H^m(\mathbb{R})$-norm, for $m \in \mathbb{N},$ where $H^0(\mathbb{R})=L^2(\mathbb{R}).$
\begin{proposition} 
Given the right symmetrizer $\Sigma$ in (\ref{symmetrizer_JinXin}) for system (\ref{JinXin_compact}), denoting by
\begin{equation}
\label{change_CD_JinXin}
\tilde{\textbf{w}}=M\textbf{u}=\left(\begin{array}{cc}
1 & 0 \\\\
\dfrac{-a \varepsilon}{\sqrt{\lambda^2 - a^2 \varepsilon^2}} & \dfrac{1}{\varepsilon \sqrt{\lambda^2-a^2 \varepsilon^2}}
\end{array}\right) \textbf{u}=\left(\begin{array}{c}
u \\\\
\dfrac{\varepsilon(v-au)}{\sqrt{\lambda^2-a^2\varepsilon^2}}
\end{array}\right),
\end{equation}
system (\ref{JinXin_compact}) can be written in (C-D) form defined in \cite{Bianchini}, i.e.
\begin{equation}
\label{CD_real_JinXin}
\partial_t \tilde{\textbf{w}} + \tilde{A}\partial_x \tilde{\textbf{w}} = -\tilde{B} \tilde{\textbf{w}} + \tilde{N}(\tilde{w}_1), 
\end{equation}
where 
\begin{equation}
\label{matrices_CD_real_JinXin}
\small{
\begin{aligned}
\tilde{A}=\left(\begin{array}{cc}
a & \dfrac{\sqrt{\lambda^2-a^2\varepsilon^2}}{\varepsilon} \\\\
\dfrac{\sqrt{\lambda^2-a^2\varepsilon^2}}{\varepsilon} & {-a}
\end{array}\right), \; 
\tilde{B}=\left(\begin{array}{cc}
0 & 0 \\\\
0 & \dfrac{1}{\varepsilon^2}
\end{array}\right), \; \tilde{N}(\tilde{w}_1)=\left(\begin{array}{c}
0 \\\\
\dfrac{h(\tilde{w}_1)}{\varepsilon\sqrt{\lambda^2-a^2\varepsilon^2}}
\end{array}\right).
\end{aligned}}
\end{equation}
\end{proposition}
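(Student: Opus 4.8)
The plan is to verify directly that the linear change of unknowns $\tilde{\textbf{w}}=M\textbf{u}$ of (\ref{change_CD_JinXin}) conjugates (\ref{JinXin_compact}) into the system (\ref{CD_real_JinXin})--(\ref{matrices_CD_real_JinXin}). The one conceptual ingredient behind the bookkeeping is that $M$ is built out of a square-root factorization of the symmetrizer $\Sigma$ in (\ref{symmetrizer_JinXin}): this is exactly what forces the symmetry of $\tilde{A}$ and the block-diagonal dissipative shape of $\tilde{B}$.

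First I would record that, for $\varepsilon$ small enough that $\lambda^2-a^2\varepsilon^2>0$, the matrix $M$ is invertible, its inverse being the lower-triangular matrix $S:=M^{-1}$ with columns $(1,\,a\varepsilon^2)^T$ and $(0,\,\varepsilon\sqrt{\lambda^2-a^2\varepsilon^2})^T$, and that a one-line multiplication gives $SS^T=\Sigma$. Thus $M$ is the inverse of a lower-triangular Cholesky factor of $\Sigma$, and $\tilde{\textbf{w}}=M\textbf{u}$ is a well-defined smooth change of variables as long as the local solution $\textbf{w}^\varepsilon$ exists.

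Next, substituting $\textbf{u}=S\tilde{\textbf{w}}$ into (\ref{JinXin_compact}) and multiplying on the left by $M=S^{-1}$ recasts the system as $\partial_t\tilde{\textbf{w}}+(S^{-1}AS)\,\partial_x\tilde{\textbf{w}}=-(S^{-1}BS)\,\tilde{\textbf{w}}+S^{-1}N(u)$, so it remains to identify $\tilde{A}=S^{-1}AS$, $\tilde{B}=S^{-1}BS$ and $\tilde{N}=S^{-1}N$. Rather than computing the symmetry of $\tilde{A}$ blindly, I would use $SS^T=\Sigma$ together with the identity $A\Sigma=(A\Sigma)^T$ noted above (the symmetry of $A_1=A\Sigma$): conjugating, it becomes $(S^{-1}AS)^T=S^{-1}AS$. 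The same argument applied to $B\Sigma=(B\Sigma)^T$, which is visible from the expression of $-B_1=-B\Sigma$, shows $\tilde{B}$ symmetric; combined with the triangular shape of $S$ and the fact that only the second row of $B$ is nonzero, this forces the $(1,1)$ and off-diagonal entries of $\tilde{B}$ to vanish, leaving $\tilde{B}=\diag(0,\,1/\varepsilon^2)$. I would then carry out the three products $S^{-1}AS$, $S^{-1}BS$, $S^{-1}N$ entrywise and check that they coincide with the matrices of (\ref{matrices_CD_real_JinXin}); since the first row of $M$ is $(1,0)$ one has $\tilde{w}_1=u$, so that $N(u)=N(\tilde{w}_1)$ and $S^{-1}N$ has the stated form, supported on the second component and depending only on $\tilde{w}_1$.

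Finally, I would verify that the triple $(\tilde{A},\tilde{B},\tilde{N})$ meets the defining requirements of the conservative--dissipative form of \cite{Bianchini}: $\tilde{A}$ symmetric, $\tilde{B}$ symmetric positive semidefinite of the block form $\diag(0,D)$ with $D=1/\varepsilon^2>0$ --- so that $\tilde{w}_1$ is the conservative variable and $\tilde{w}_2$ the strictly dissipated one --- and the nonlinearity supported on the dissipative component and depending only on the conservative variable. This step is routine; the only thing deserving care is keeping track of the $\varepsilon$-weights, namely the $\sqrt{\lambda^2-a^2\varepsilon^2}/\varepsilon$ in the off-diagonal of $\tilde{A}$ against the $1/\varepsilon^2$ appearing in $\tilde{B}$ and $\tilde{N}$, since --- as anticipated in the introduction --- these different powers of $\varepsilon$ are exactly what later obstructs a direct application of the Shizuta--Kawashima condition.
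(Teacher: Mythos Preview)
Your proposal is correct. The paper states this proposition without proof, leaving it as a direct computational verification; your argument supplies exactly that verification, and your additional observation that $S=M^{-1}$ is a lower-triangular Cholesky factor of $\Sigma$ (so $SS^T=\Sigma$) nicely explains \emph{a priori} why $\tilde A=S^{-1}AS$ and $\tilde B=S^{-1}BS$ come out symmetric, rather than relying on the entrywise computation alone.
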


\section{The Green function of the linear partially dissipative system} \label{Section2}
We consider the linear part of the (C-D) system (\ref{CD_real_JinXin})-(\ref{matrices_CD_real_JinXin}) without the \emph{tilde} for simplicity,
\begin{equation}
\label{CD_real_JinXin_no_tilde}
\partial_t \textbf{w}+A\partial_x \textbf{w}=-B\textbf{w}.
\end{equation}
We want to apply the approach developed in \cite{Bianchini}, to study the singular approximation system above. The main difficulty here is to deal with the singular perturbation parameter $\varepsilon$.
We consider the Green kernel $\Gamma(t,x)$ of (\ref{CD_real_JinXin_no_tilde}), which satisfies
\begin{equation}
\label{Green_kernel_JinXin}
\begin{cases}
\partial_t \Gamma + A \partial_x \Gamma = -B\Gamma, \\
\Gamma (0, x)= \delta(x) I.
\end{cases}
\end{equation}
Taking the Fourier transform $\hat{\Gamma},$ we get
\begin{equation}
\label{Fourier_JinXin}
\begin{cases}
\frac{d}{dt}\hat{\Gamma}=(-B-i\xi A) \hat{\Gamma}, \\
\hat{\Gamma}(0, \xi)=I.
\end{cases}
\end{equation}
Consider the entire function
\begin{equation}
\label{E_JinXin}
E(z)=-B-zA=\left(\begin{array}{cc}
-az & -\dfrac{z\sqrt{\lambda^2-a^2\varepsilon^2}}{\varepsilon} \\\\
-\dfrac{z\sqrt{\lambda^2-a^2\varepsilon^2}}{\varepsilon} & {az}-\dfrac{1}{\varepsilon^2}
\end{array}\right).
\end{equation}
Formally, the solution to (\ref{Fourier_JinXin}) is given by
\begin{equation}
\label{Formal_solution_JinXin}
\hat{\Gamma}(t, \xi)=e^{E(i\xi)t}=\sum_{n=0}^\infty (-B -i\xi A)^n.
\end{equation}
Since $E(z)$ in (\ref{E_JinXin}) is symmetric, if $z$ is not exceptional we can write
$$E(z)=\lambda_1(z)P_1(z)+\lambda_2(z)P_2(z),$$
where $\lambda_1(z), \lambda_2(z)$ are the eigenvalues of $E(z),$ and $P_1(z), P_2(z)$ the related eigenprojections, given by
$$P_j(z)=-\frac{1}{2\pi i} \oint_{|\xi-\lambda_j(z)|<<1} (E(z)-\xi I)^{-1} \, d\xi, \qquad j=1, 2.$$
Following \cite{Bianchini}, we study the low frequencies (case $z=0$) and the high frequencies (case $z=\infty$) separately.
\paragraph{\textbf{Case $z=0$}}
The total projector for the eigenvalues near to $0$ is
\begin{equation}
\label{total_projector_near_zero}
P(z)=-\dfrac{1}{2 \pi i} \oint_{|\xi|<<1} (E(z)-\xi I)^{-1} \, d\xi.
\end{equation}
Besides, it has the following expansion, see \cite{Kato},
\begin{equation}
\label{total_projector_expansion}
P(z)=P_0+\sum_{n \ge 1} z^{n}P^{n}(z),
\end{equation}
where $P_0$ is the eigenprojection for $E(0)-\xi I=-B-\xi I,$ i.e.
\begin{equation}
\label{Pn_JinXin}
P_0=-\dfrac{1}{2 \pi i } \oint_{|\xi| <<1} (-B-\xi I)^{-1} \, d\xi=:Q_0=\left(\begin{array}{cc}
1 & 0 \\
0 & 0
\end{array}\right), \; P^{n}(z)=-\dfrac{1}{2\pi i} \oint R^{(n)} (\xi) \, d\xi,
\end{equation}
with $R^{(n)}$ the $n$-th term in the expansion of the resolvent (\ref{Rn_XinJin}). Here $Q_0$ is the projection onto the null space of the source term, while we denote by $Q_{-}=I-Q_0$ the complementing projection, and by $L_{-}, L_0$ and $R_{-}, R_0$  the related left and right eigenprojectors, see \cite{Kato, Bianchini}, i.e.
$$L_{-}=R_{-}^T=\left(\begin{array}{cc}
0 & 1
\end{array}\right), \qquad L_0=R_0^T=\left(\begin{array}{cc}
1 & 0
\end{array}\right),$$
$$Q_{-}=R_{-}L_{-}, \qquad Q_0=R_0L_0.$$ 
On the other hand, from \cite{Kato},
\begin{align*}
R(\xi, z)=(E(z)-\xi I)^{-1} & =(-B-zA-\xi I)^{-1}=(-B-\xi I)^{-1} \sum_{n=0}^\infty (Az (-B-\xi I)^{-1})^n\\
&=(-B-\xi I)^{-1}+\sum_{n \ge 1} (-B-\xi I)^{-1} z^n (A (-B-\xi I)^{-1})^n\\
&=R_0 (\xi)+\sum_{n \ge 1} R^{(n)} (\xi),
\end{align*}
i.e.
\begin{equation}
\label{Rn_XinJin}
R^{(n)}=z^n (-B-\xi I)^{-1} (A (-B-\xi I)^{-1})^n.
\end{equation}
Since a neighborhood of $z=0$ is considered, at this point the authors in \cite{Bianchini} take the first two terms of the asymptotic expansion of the total projector (\ref{total_projector_expansion}), so obtaining an expression with a remainder $O(z^2)$. We cannot approximate the projector in the same way, since we need to check the singular terms in $\varepsilon$. Thus, we perform an explicit spectral analysis for the Green function of our problem. First of all,
\begin{equation}
A(-B-\xi I)^{-1}=\left(\begin{array}{cc}
-\dfrac{a}{\xi} & -\dfrac{\varepsilon \sqrt{\lambda^2-a^2\varepsilon^2}}{1+\varepsilon^2 \xi} \\\\
-\dfrac{\sqrt{\lambda^2-a^2\varepsilon^2}}{\varepsilon \xi} & \dfrac{a\varepsilon^2}{1+\varepsilon^2 \xi} 
\end{array}\right),
\end{equation}
which is diagonalizable, i.e.
$$A(-B-\xi I)^{-1}=V D V^{-1},$$
where $D$ is the diagonal matrix with entries given by the eigenvalues, and $V$ is the matrix with the eigenvectors on the columns. Explicitly, setting
\begin{equation}
\label{Box_JinXin}
\Box:=a^2+4\varepsilon^2 \lambda^2 \xi^2 + 4 \lambda^2 \xi,
\end{equation}
we have
\begin{equation}
\label{eigenvalues_vectors_XinJin}
D=diag \Bigg\{ \dfrac{-a \pm \sqrt{\Box}}{2 \xi (1+\varepsilon^2 \xi)}\Bigg\}, \, V=\left(\begin{array}{cc}
\displaystyle \frac{\varepsilon (a+\sqrt{\Box}+2a \varepsilon^2 \xi)}{2(1+\varepsilon^2 \xi) \sqrt{\lambda^2-a^2 \varepsilon^2}} & \displaystyle \frac{\varepsilon (a-\sqrt{\Box}+2a \varepsilon^2 \xi)}{2(1+\varepsilon^2 \xi) \sqrt{\lambda^2-a^2 \varepsilon^2}} \\\\
1 & 1
\end{array}\right),
\end{equation}
\begin{equation}
\label{inv_eigenvectors_JinXin}
V^{-1}=\left(\begin{array}{cc}
\dfrac{(1+\varepsilon^2 \xi) \sqrt{\lambda^2-a^2\varepsilon^2}}{\varepsilon \sqrt{\Box}} & \dfrac{-a+\sqrt{\Box}-2a \varepsilon^2 \xi}{2 \sqrt{\Box}} \\\\
-\dfrac{(1+\varepsilon^2 \xi) \sqrt{\lambda^2-a^2\varepsilon^2}}{\varepsilon \sqrt{\Box}} & \dfrac{a+\sqrt{\Box}+2a \varepsilon^2 \xi}{2 \sqrt{\Box}}
\end{array}\right).
\end{equation}
This way, denoting by
$$\Diamond_1=a-\sqrt{\Box}+2a\varepsilon^2 \xi, \; \Diamond_2=a+\sqrt{\Box}+2a\varepsilon^2 \xi,  \; \triangle_1=-\dfrac{a+\sqrt{\Box}}{2\xi (1+\varepsilon^2 \xi)}, \; \triangle_2=-\dfrac{a-\sqrt{\Box}}{2 \xi (1+\varepsilon^2 \xi)},$$
with $\Box$ in (\ref{Box_JinXin}), from (\ref{Rn_XinJin}) we have
$$R^{(n)}=z^n (-B-\xi I)^{-1}(A(-B-\xi I)^{-1})^n=z^n (-B-\xi I)^{-1}(V D^n V^{-1})$$
$$=z^n\left(\begin{array}{cc}
\dfrac{\Diamond_1 \triangle_2^n-\Diamond_2 \triangle_1^n}{2\sqrt{\Box} \xi} & -\varepsilon \sqrt{\dfrac{\lambda^2-a^2\varepsilon^2}{\Box}} (\triangle_1^n-\triangle_2^n) \\\\
-\varepsilon \sqrt{\dfrac{\lambda^2-a^2\varepsilon^2}{\Box}} (\triangle_1^n-\triangle_2^n) &
-\dfrac{\varepsilon^2}{2(1+\varepsilon^2 \xi)\sqrt{\Box}}(\Diamond_2 \triangle_2^n-\Diamond_1 \triangle_1^n)
\end{array}\right).$$
%We see that there are not singular terms in $\varepsilon$. In fact, setting $\varepsilon=0$, the expression above gives
%$$R^{(n)}|_{\varepsilon=0}$$
%$$=diag \Bigg\{
%\dfrac{(-1)^n}{2(2\xi)^{n}\sqrt{a^2+4\lambda^2 \xi}} \Bigg[\Bigg(a-\sqrt{a^2+4\lambda^2 \xi}\Bigg)^{n+1}-\Bigg(a+\sqrt{a^2+4\lambda^2 \xi}\Bigg)^{n+1}\Bigg], \, 0 \Bigg\}.$$
The matrix above is completely bounded in $\varepsilon,$ and so we can approximate the expression of the total projector (\ref{total_projector_expansion}) up to the second order. To this end, we consider the previous expression of $R^{(n)}$ for $n=0, 1, 2,$ we apply the integral formula (\ref{Pn_JinXin}) and we obtain
\begin{equation}
\label{total_projector_approximation}
P(z)=\left(\begin{array}{cc}
1+O(z^2) & -\varepsilon z \sqrt{\lambda^2-a^2\varepsilon^2} + \varepsilon O(z^2) \\\\
-\varepsilon z \sqrt{\lambda^2-a^2\varepsilon^2}  + \varepsilon O(z^2) \quad & \varepsilon^2 z^2(\lambda^2-a^2\varepsilon^2) + \varepsilon^2 O(z^3)
\end{array}\right).
\end{equation}
Now, we consider the left $L(z)$ and the right $R(z)$ eigenprojectors of $P(z),$ i.e. 
\begin{align*}
& P(z)=R(z)L(z), \quad L(z)R(z)=I \\
& L(z)P(z)=L(z), \quad P(z)R(z)=R(z).
\end{align*}
We can limit ourselves to the second order approximation, according to (\ref{total_projector_approximation}). Then, we consider 
\begin{equation}
\label{tilde_Projector}
\tilde{P}(z)=\left(\begin{array}{cc}
1& -\varepsilon z \sqrt{\lambda^2-a^2\varepsilon^2} \\\\
-\varepsilon z \sqrt{\lambda^2-a^2\varepsilon^2}  \quad & \varepsilon^2 z^2(\lambda^2-a^2\varepsilon^2) 
\end{array}\right),
\end{equation}
and, by applying the conditions above, we obtain
\begin{align*}
\tilde{L}(z)=\left(\begin{array}{cc} 
1 & -\varepsilon z \sqrt{\lambda^2-a^2 \varepsilon^2}
\end{array}\right), \qquad \tilde{R}(z)=\left(\begin{array}{c}
1 \\
-\varepsilon z \sqrt{\lambda^2-a^2\varepsilon^2}
\end{array}\right),
\end{align*}
where 
\begin{equation}
\begin{array}{cc}
\tilde{P}(z)=\tilde{R}(z)\tilde{L}(z), \quad & \tilde{L}(z)\tilde{R}(z)=1+\varepsilon^2 O(z^2), \\
\tilde{P}(z)\tilde{R}(z)=\tilde{R}(z)+\varepsilon^2 O(z^2) , \quad &
\tilde{L}(z)\tilde{P}(z)=\tilde{L}(z)+\varepsilon^2 O(z^2) ,
\end{array}
\end{equation}
and so $$P(z)=\tilde{P}(z)+O(z^2), \quad R(z)=\tilde{R}(z)+O(z^2), \quad \quad L(z)=\tilde{L}(z)+O(z^2).$$
Let us point out that further expansions of $L(z), R(z)$ are not singular in $\varepsilon$ too, since the weights in $\varepsilon$ of these vectors come from (\ref{total_projector_approximation}). Precisely, one can see that $L^\varepsilon(z)$ depends on $\varepsilon$ as follows:
$$L^\varepsilon(\cdot)=\left(\begin{array}{cc}
1 \quad & O(\varepsilon)
\end{array}\right)=[{R(\cdot)^\varepsilon}]^T.$$
Now, by using the left and the right operators, we decompose $E(z)$ in the following way, see \cite{Bianchini},
\begin{equation}
\label{E_decompostition_JinXin}
E(z)=R(z)F(z)L(z)+R_{-}(z)F_{-}(z)L_{-}(z),
\end{equation}
where $L_{-}(z), R_{-}(z)$ are left and right eigenprojectors of 
$P_{-}(z)=I-P(z),$ while
$$F(z)=L(z)E(z)R(z), \quad F_{-}(z)=L_{-}(z)E(z)R_{-}(z).$$
We use the approximations of $L(z), R(z)$ above, and so
\begin{equation}
\label{F_approximation_JinXin}
F(z)=(\tilde{L}(z)+O(z^2))(-B-Az)(\tilde{R}(z)+O(z^2))=-az+(\lambda^2-a^2\varepsilon^2)z^2+O(z^3).
\end{equation}
We study $F_{-}(z)$. Matrix (\ref{total_projector_approximation}) and the definition above imply that
\begin{equation}
\label{Compl_total_projector_JinXin}
P_{-}(z)=\left(\begin{array}{cc}
O(z^2) \quad & z \varepsilon \sqrt{\lambda^2-a^2 \varepsilon^2}+\varepsilon O(z^2) \\\\
z \varepsilon \sqrt{\lambda^2-a^2 \varepsilon^2}+\varepsilon O(z^2) \quad & 1+\varepsilon^2O(z^2)
\end{array}\right),
\end{equation}
and, approximating again,
$$L_{-}(z)=\tilde{L}_{-}(z)+O(z^2)=\left(\begin{array}{cc}
z \varepsilon \sqrt{\lambda^2-a^2\varepsilon^2} \quad & 1
\end{array}\right)+O(z^2),$$ 
$$R_{-}(z)=\tilde{R}_{-}(z)+O(z^2)=\left(\begin{array}{c}
z \varepsilon \sqrt{\lambda^2-a^2\varepsilon^2} \\
1
\end{array}\right)+O(z^2).$$
Thus,
\begin{equation}
\label{F_meno_approximation_JinXin}
F_{-}(z)=\tilde{L}_{-}(z)(-B-Az)\tilde{R}_{-}(z)+O(z^2)=-\frac{1}{\varepsilon^2}+az+O(z^2).
\end{equation}

This yields the proposition below.
\begin{proposition}
\label{Proposition_z_small}
We have the following decomposition near $z=0$:
\begin{equation}
\label{Decomposition_z_small}
E(z)=F(z)P(z)+E_{-}(z),
\end{equation}
with $F(z)$ in (\ref{F_approximation_JinXin}), $P(z)$ in (\ref{total_projector_approximation}), $E_{-}(z)=R_{-}(z)F_{-}(z)L_{-}(z),$ and $F_{-}(z)$ in (\ref{F_meno_approximation_JinXin}).
\end{proposition}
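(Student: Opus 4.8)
The plan is to read off (\ref{Decomposition_z_small}) directly from the spectral (block‑diagonal) decomposition (\ref{E_decompostition_JinXin}) of $E(z)$, using the elementary fact that the reduced operator on the one–dimensional ``conservative'' invariant subspace is a scalar, and then to identify the three ingredients $F(z)$, $P(z)$, $E_{-}(z)=R_{-}(z)F_{-}(z)L_{-}(z)$ with the explicit expansions already computed in this section.

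First I would set up the analytic perturbation framework of \cite{Kato}. At $z=0$ the matrix $E(0)=-B$ in (\ref{E_JinXin}) has the two simple and widely separated eigenvalues $0$ and $-1/\varepsilon^{2}$, with eigenprojections $Q_0$ and $Q_{-}=I-Q_0$ as in (\ref{Pn_JinXin}); hence for $|z|$ small enough the group of eigenvalues near $0$ consists of a single analytic branch with analytic total projector $P(z)$ of (\ref{total_projector_near_zero}) and expansion (\ref{total_projector_expansion}), and its complement $P_{-}(z)=I-P(z)$ is the analytic eigenprojection of the remaining branch issuing from $-1/\varepsilon^{2}$. Both projectors have rank one; writing $P=RL$, $LR=1$ and $P_{-}=R_{-}L_{-}$, $L_{-}R_{-}=1$ for the associated left/right eigenprojectors, and using that the Riesz projector $P(z)$ commutes with $E(z)$ (so that $P(z)E(z)P_{-}(z)=E(z)P(z)P_{-}(z)=0$, and likewise for the symmetric cross term), one obtains
$$E(z)=P(z)E(z)P(z)+P_{-}(z)E(z)P_{-}(z)=R(z)F(z)L(z)+R_{-}(z)F_{-}(z)L_{-}(z),$$
with $F(z)=L(z)E(z)R(z)$ and $F_{-}(z)=L_{-}(z)E(z)R_{-}(z)$, which is exactly (\ref{E_decompostition_JinXin}). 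Since $\mathrm{Ran}\,P(z)$ is one–dimensional, $F(z)$ is a scalar (the eigenvalue $\lambda_1(z)$), so $R(z)F(z)L(z)=F(z)R(z)L(z)=F(z)P(z)$; setting $E_{-}(z):=R_{-}(z)F_{-}(z)L_{-}(z)$ then gives precisely (\ref{Decomposition_z_small}).

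It remains to confirm that the objects appearing in the statement are the expansions (\ref{total_projector_approximation}), (\ref{F_approximation_JinXin}), (\ref{F_meno_approximation_JinXin}) derived above. The projector is computed by inserting the closed–form resolvent coefficients $R^{(n)}$ of (\ref{Rn_XinJin}), written out explicitly for $n=0,1,2$ via the diagonalization $A(-B-\xi I)^{-1}=VDV^{-1}$, into the contour integrals (\ref{Pn_JinXin}); since each $R^{(n)}$ is a bounded function of $\varepsilon$, truncating the series (\ref{total_projector_expansion}) at $n=2$ yields (\ref{total_projector_approximation}), with the displayed powers of $\varepsilon$ in the remainder forced by the powers already present in the terms $n\le 2$. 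Reading off $\tilde P(z)$ from (\ref{tilde_Projector}) and the approximate eigenvectors $\tilde L,\tilde R,\tilde L_{-},\tilde R_{-}$ listed above, a short multiplication with $E(z)$ from (\ref{E_JinXin}) produces $F(z)=\tilde L(z)E(z)\tilde R(z)+O(z^2)$, equal to (\ref{F_approximation_JinXin}), and $F_{-}(z)=\tilde L_{-}(z)E(z)\tilde R_{-}(z)+O(z^2)$, equal to (\ref{F_meno_approximation_JinXin}).

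The main obstacle — and the reason the explicit computation of this section is needed in place of the abstract argument of \cite{Bianchini} — is to guarantee that nothing here is singular in $\varepsilon$: one must check that the disk around $z=0$ on which the series (\ref{total_projector_expansion}) converges can be chosen uniformly for small $\varepsilon$, and that the $O(z^{k})$ remainders in (\ref{total_projector_approximation})–(\ref{F_meno_approximation_JinXin}) carry coefficients bounded independently of $\varepsilon$, so that the scalar prefactors $\varepsilon$ and $\varepsilon^{2}$ genuinely record the whole $\varepsilon$–dependence (as is then used in the remark following (\ref{F_meno_approximation_JinXin})). Concretely this reduces to estimating, for $|\xi|$ small on the integration contour and $\varepsilon$ small, the quantity $\sqrt{\Box}=\sqrt{a^{2}+4\varepsilon^{2}\lambda^{2}\xi^{2}+4\lambda^{2}\xi}$ in (\ref{Box_JinXin}) and the associated $\triangle_1,\triangle_2,\Diamond_1,\Diamond_2$ entering the closed form of $R^{(n)}$ — in particular controlling $\sqrt{\Box}$ near $\xi=0$ uniformly in $\varepsilon$ — which is exactly the point where the explicit formulas of this section are indispensable.
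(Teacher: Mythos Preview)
Your proposal is correct and follows essentially the same approach as the paper. The paper does not give a separate proof of this proposition; it simply states ``This yields the proposition below'' after the chain of computations you describe, citing \cite{Bianchini} for the block decomposition (\ref{E_decompostition_JinXin}) and then reading off the explicit formulas (\ref{total_projector_approximation}), (\ref{F_approximation_JinXin}), (\ref{F_meno_approximation_JinXin}). Your write-up makes explicit the two logical points the paper leaves implicit --- that the Riesz projector $P(z)$ commutes with $E(z)$, giving the block splitting, and that rank one forces $F(z)$ to be scalar so that $R(z)F(z)L(z)=F(z)P(z)$ --- but these are exactly what is behind the reference to \cite{Bianchini}.
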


\paragraph{\textbf{Case $z=\infty$}}
We consider
$E(z)=-B-Az=z(-B/z-A)=z E_1(1/z)$ and, setting $z=i\xi$ and  $\zeta=1/z=-i\eta,$ with $\xi, \eta \in \mathbb{R},$ we have $E_1(\zeta)=-A- \zeta B$
$$=\left(\begin{array}{cc}
-a \quad & -\dfrac{\sqrt{\lambda^2-a^2 \varepsilon^2}}{\varepsilon} \\\\
 -\dfrac{\sqrt{\lambda^2-a^2 \varepsilon^2}}{\varepsilon} \quad & a- \dfrac{\zeta}{\varepsilon^2}
\end{array}\right)
=\left(\begin{array}{cc}
-a \quad & -\dfrac{\sqrt{\lambda^2-a^2 \varepsilon^2}}{\varepsilon} \\\\
 -\dfrac{\sqrt{\lambda^2-a^2 \varepsilon^2}}{\varepsilon} \quad & a+ \dfrac{i \eta}{\varepsilon^2}
\end{array}\right).$$
Since $E_1(\zeta)$ is symmetric, we determine the eigenvalues and the right eigenprojectors, 
$$-A-\zeta B=\lambda_1^{E_1}(\zeta) R_1(\zeta) R_1^T(\zeta) + \lambda_2^{E_1}(\zeta) R_2(\zeta) R_2^T(\zeta),$$
such that, for $j=1, 2,$ $R_j^T(\zeta) R_j(\zeta)=I$.
The following expression for the eigenvalues of $E_1(i \eta)$ is provided
$$\lambda_{1, 2}^{E_1}(z)= \dfrac{i \eta}{2 \varepsilon^2} \pm \dfrac{\sqrt{4 \varepsilon^2 \lambda^2 + 4 a \eta \varepsilon^2 i - \eta^2}}{2 \varepsilon^2},$$ and it is simple to prove that both the corresponding eigenvalues of $E(z),$ which can be obtained multiplying $\lambda_1^{E_1}(z)$ and $\lambda_2^{E_1}(z)$ above by $z=i\xi=i/\eta,$
have a strictly negative real part in the high frequencies regime ($|\zeta|=|\eta| << 1$) and in the vanishing $\varepsilon$ limit. Moreover, setting 
$\delta_{1, 2}=\sqrt{8 \varepsilon^2 \lambda^2 + 2 \zeta^2 - 8a\varepsilon^2 \zeta \pm (- 2 \zeta \sqrt{\mu}+4a\varepsilon^2 \sqrt{\mu})},$ where $\mu=4\varepsilon^2 \lambda^2+\zeta^2-4a\varepsilon^2\zeta,$ the normalized right eigenprojectors are given by:
$$R_1(\zeta)=\dfrac{1}{\delta_1}\left(\begin{array}{c}
{(2a\varepsilon^2-\zeta)+\sqrt{\mu}} \\\\
{2\varepsilon \sqrt{\lambda^2-a^2\varepsilon^2}}
\end{array}\right), \quad R_2(\zeta)=\dfrac{1}{\delta_2}\left(\begin{array}{c}
{(2a\varepsilon^2-\zeta)-\sqrt{\mu}} \\\\
{2\varepsilon \sqrt{\lambda^2-a^2\varepsilon^2}}
\end{array}\right).$$

The eigenprojectors are bounded in $\varepsilon,$ even for $\zeta$ near to zero. Thus, we can approximate the total projector of
$E_1(\zeta)=-A-\zeta B$ in a more convenient way, i.e. we decompose
$$A=\lambda_1 R_1 R_1^T+\lambda_2 R_2 R_2^T,$$ where
$\lambda_1=\lambda/\varepsilon, \, \lambda_2=-\lambda/\varepsilon,$ and the corresponding eigenprojectors $$ R_1=\dfrac{1}{\sqrt{2\lambda}}\left(\begin{array}{c}
\sqrt{\dfrac{\lambda^2-a^2 \varepsilon^2}{(\lambda - a \varepsilon)}} \\\\
\sqrt{{\lambda-a \varepsilon}}
\end{array}\right),
\; R_2=\dfrac{1}{\sqrt{2\lambda}}\left(\begin{array}{c}
-\sqrt{\dfrac{\lambda^2-a^2 \varepsilon^2}{(\lambda+a \varepsilon)}} \\\\
\sqrt{{\lambda+a \varepsilon}}
\end{array}\right).$$
Now, by considering the total projector for the family of eigenvalues going to $\lambda_j=\pm \lambda/\varepsilon$ as $\zeta \approx 0,$ we obtain the following approximations:
\begin{equation}
\label{F1_JinXin}
{F_1}_j(\zeta)=-\lambda_j I + \zeta R_j^T (-B) R_j + O(\zeta^2).
\end{equation}
Explicitly,
\begin{equation}
\label{F12_JinXin_explicit}
{F_1}_1(\zeta)=-\dfrac{\lambda}{\varepsilon} - \dfrac{(\lambda-a \varepsilon)\zeta}{2\lambda \varepsilon^2} + O(\zeta^2), \;\;\; {F_1}_2(\zeta)=\dfrac{\lambda}{\varepsilon} - \dfrac{(\lambda+a \varepsilon)\zeta}{2\lambda \varepsilon^2} + O(\zeta^2).
\end{equation}

Since $E(z)=z E_1(1/z),$ we multiply $F_1(\zeta)=F_1(1/z)$ by $z$ and, for $|z|\rightarrow + \infty,$
\begin{equation}
\label{eig_zeta_infty_JinXin}
\lambda_1(z)=-\dfrac{\lambda}{\varepsilon}z-\dfrac{\lambda-a \varepsilon}{2 \lambda \varepsilon^2}+O(1/z), \quad
\lambda_2(z)=\dfrac{\lambda}{\varepsilon}z-\dfrac{\lambda+a \varepsilon}{2 \lambda \varepsilon^2}+O(1/z),
\end{equation}
while the projectors are
\begin{equation}
\label{projector_z_big_JinXin}
\mathcal{P}_j(z)=R_j R_j^T+O(1/z), \quad j=1,2.
\end{equation}

\begin{remark}
\label{remark_zeta_infinity}
Notice that the term $O(1/z)$ in (\ref{eig_zeta_infty_JinXin}) could be singular in $\varepsilon$. However, from the previous discussion, the eigenvalues of $E(z)$ have a strictly negative real part. This implies that the coefficients of the even powers of $z$ in (\ref{eig_zeta_infty_JinXin}) have a negative sign, while the others are imaginary terms. Thus, $e^{\lambda_{1,2}(z)}$ are bounded in $\varepsilon$.
\end{remark}

\begin{proposition}
\label{Proposition_z_infinity}
We have the following decomposition near $z = \infty$:
\begin{equation}
\label{Decomposition_z_large}
E(z)=\lambda_1(z) \mathcal{P}_1(z)+\lambda_2(z) \mathcal{P}_2(z),
\end{equation}
with $\lambda_1(z), \lambda_2(z)$ in (\ref{eig_zeta_infty_JinXin}), and $\mathcal{P}_1(z), \mathcal{P}_2(z)$ in (\ref{projector_z_big_JinXin}).
\end{proposition}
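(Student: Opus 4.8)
The plan is to read off the decomposition (\ref{Decomposition_z_large}) from classical analytic perturbation theory applied to the affine pencil $E_1(\zeta)=-A-\zeta B$ near $\zeta=0$, and then to transport it to $E(z)$ through the homogeneity relation $E(z)=z\,E_1(1/z)$. The first point is that $E_1(0)=-A$ is diagonalizable with the two \emph{simple} eigenvalues $\mp\lambda/\varepsilon$, i.e. with $-\lambda_1,-\lambda_2$ where $\lambda_1=\lambda/\varepsilon$, $\lambda_2=-\lambda/\varepsilon$, separated by the spectral gap $2\lambda/\varepsilon>0$; the associated eigenprojections are $R_jR_j^{T}$ with $R_j$ the unit eigenvectors displayed above. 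Since $E_1$ is entire in $\zeta$ and the unperturbed eigenvalues are simple, Kato's theory (see \cite{Kato,Bianchini}) yields, for $\zeta$ in a neighbourhood of $0$, a holomorphic splitting $E_1(\zeta)=\sum_{j=1,2}\lambda_j^{E_1}(\zeta)\,\mathcal{P}_j^{E_1}(\zeta)$ into the rank-one eigenprojections obtained from the Dunford integral of $(E_1(\zeta)-\mu I)^{-1}$ around $\mp\lambda/\varepsilon$; since each block is one-dimensional, the associated reduced operator is the scalar ${F_1}_{j}(\zeta)=\lambda_j^{E_1}(\zeta)$ of (\ref{F1_JinXin}). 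The first-order Kato formulas give $\lambda_j^{E_1}(\zeta)=-\lambda_j+\zeta\,R_j^{T}(-B)R_j+O(\zeta^{2})$ and $\mathcal{P}_j^{E_1}(\zeta)=R_jR_j^{T}+O(\zeta)$, and since $-B=\diag(0,-1/\varepsilon^{2})$ one computes $R_1^{T}(-B)R_1=-(\lambda-a\varepsilon)/(2\lambda\varepsilon^{2})$ and $R_2^{T}(-B)R_2=-(\lambda+a\varepsilon)/(2\lambda\varepsilon^{2})$, which is exactly (\ref{F12_JinXin_explicit}). Alternatively — the route that keeps the $\varepsilon$-weights explicit, as is needed here — one may simply Taylor-expand at $\zeta=0$ the closed-form eigenvalues $\lambda_{1,2}^{E_1}$ and the closed-form normalized eigenvectors $R_{1,2}(\zeta)$ written above, reaching the same two expansions without having to estimate a priori the radius of convergence.

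Next I would invoke $E(z)=z\,E_1(1/z)$: multiplication by the scalar $z$ leaves eigenprojections unchanged, so $\mathcal{P}_j(z)=\mathcal{P}_j^{E_1}(1/z)$, while the eigenvalues scale as $\lambda_j(z)=z\,\lambda_j^{E_1}(1/z)$. Substituting $\zeta=1/z$ into the two expansions and multiplying the eigenvalue one by $z$ produces (\ref{eig_zeta_infty_JinXin}) and (\ref{projector_z_big_JinXin}), hence the identity (\ref{Decomposition_z_large}) for $|z|$ large. Finally, to make the representation usable when it is combined with the Green function estimates, I would invoke Remark \ref{remark_zeta_infinity}: the remainder $O(1/z)$ in $\lambda_j(z)$ may a priori be singular in $\varepsilon$, but the exact eigenvalues of $E(i\xi)$ have strictly negative real part in the high-frequency/vanishing-$\varepsilon$ regime, which forces the $z^{0}$-coefficient to be real and negative and the $z^{1}$-coefficient to be purely imaginary, so that $e^{\lambda_j(z)t}$ stays bounded uniformly in $\varepsilon$.

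The main obstacle is exactly this $\varepsilon$-bookkeeping, not the existence of the decomposition itself, which is classical for each fixed $\varepsilon$. The gap $2\lambda/\varepsilon$ together with the size $\|\zeta B\|\sim|\zeta|/\varepsilon^{2}$ of the perturbation makes the ``natural'' neighbourhood of $\zeta=0$ shrink proportionally to $\varepsilon$, so the expansion near $z=\infty$ is effective only for $|z|$ at least of order $1/\varepsilon$; this is precisely the high-frequency window for the singular problem, and it has to be respected when the decomposition is fed into the bounds on $\hat{\Gamma}=e^{E(i\xi)t}$. The genuinely delicate step is therefore to verify that each remainder in (\ref{eig_zeta_infty_JinXin})–(\ref{projector_z_big_JinXin}), once inserted in the exponential, is controlled uniformly in $\varepsilon$ — which is why the explicit eigen-data for $E_1(\zeta)$, combined with the sign information of Remark \ref{remark_zeta_infinity}, are used in place of generic perturbation estimates.
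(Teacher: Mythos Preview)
Your proposal is correct and follows essentially the same route as the paper: the paper also passes to $E_1(\zeta)=-A-\zeta B$ via $E(z)=zE_1(1/z)$, diagonalizes $-A$ explicitly with the unit eigenvectors $R_1,R_2$, applies the first-order Kato formula $F_{1j}(\zeta)=-\lambda_j+\zeta R_j^{T}(-B)R_j+O(\zeta^{2})$ together with $\mathcal{P}_j=R_jR_j^{T}+O(1/z)$, and then multiplies by $z$ to obtain (\ref{eig_zeta_infty_JinXin})--(\ref{projector_z_big_JinXin}); the $\varepsilon$-boundedness of the projectors and the sign argument for the remainder are exactly the content of the discussion preceding the proposition and of Remark~\ref{remark_zeta_infinity}. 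Your additional remark that the perturbative window in $\zeta$ shrinks like $\varepsilon$ (equivalently $|z|\gtrsim 1/\varepsilon$) is a useful sharpening the paper leaves implicit, but it does not alter the argument.
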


\section{Green function estimates} \label{Section3}
\paragraph{Green function estimates near $z=0$}
We associate to (\ref{F_approximation_JinXin}) the parabolic equation
$$\partial_t w+a \partial_x w=(\lambda^2-a^2 \varepsilon^2)\partial_{xx} w.$$
We can write the explicit solution
\begin{equation}
\label{z_piccolo_JinXin}
g(t, x)= \dfrac{1}{2 \sqrt{(\lambda^2 - a^2 \varepsilon^2)\pi t}} \exp \Bigg\{ -\dfrac{(x-at)^2}{4(\lambda^2 - a^2 \varepsilon^2)t} \Bigg\}.
\end{equation}
This means that, for some $c_1, c_2>0$,
\begin{equation}
\label{z_piccolo_JinXin_estimate}
|g(t, x)| \le \dfrac{c_1}{\sqrt{t}} e^{-(x-at)^2/ct}, \qquad (t, x) \in \mathbb{R}^{+}\times \mathbb{R}, \qquad \forall \varepsilon>0.
\end{equation}
Now, recalling Proposition \ref{Proposition_z_small} and considering the approximation $\tilde{P}(z)$ in (\ref{tilde_Projector}) of the total projector $P(z)$ in (\ref{total_projector_approximation}),
$$e^{E(z)t}=\hat{g}(z) \tilde{P}(z)+R_{-}(z)e^{F_{-}(z)t}L_{-}(z)+\hat{R}_1(t, z),$$
where $\hat{g}(z)=-az-(\lambda^2-\varepsilon^2a^2)z^2,$ and $R_1(t,x)$ is a remainder term, we take the inverse of the Fourier transform of 
\begin{equation}
\label{K_dritto_JinXin_Fourier}
\hat{K}(z)=\hat{g}(z) \tilde{P}(z),
\end{equation}
which yields the expression of the first part of the Green function near $z=0$, i.e.
\begin{equation}
\label{K_dritto_JinXin}
K(t, x)=\left(\begin{array}{cc}
g(t, x) \quad & \varepsilon \sqrt{\lambda^2 -a^2 \varepsilon^2} \Bigg(\dfrac{d g(t,x)}{dx}\Bigg) \\\\
\varepsilon \sqrt{\lambda^2 -a^2 \varepsilon^2} \Bigg(\dfrac{d g(t,x)}{dx}\Bigg) \quad & \varepsilon^2 (\lambda^2 - a^2 \varepsilon^2) \Bigg(\dfrac{d^2 g(t,x)}{d^2 x}\Bigg)
\end{array}\right).
\end{equation}
Here, $\hat{K}(t, \xi)$ is the approximation of $\hat{\Gamma}(t, \xi)$ in (\ref{Formal_solution_JinXin}) for $|\xi| \approx 0.$
Thus, for $\xi \in [-\delta, \delta] $ with $\delta>0$ sufficiently small, we consider the following remainder term
\begin{equation}
\label{Remainder_1_JinXin}
\begin{aligned}
R_1(t, x)&=\dfrac{1}{2\pi}\int_{-\delta}^\delta (e^{E(i \xi)t}-e^{\hat{K}(t, \xi)t}) e^{i \xi x} \, d \xi \\
& = \dfrac{1}{2\pi}\int_{-\delta}^\delta e^{i\xi(x-at)-\xi^2(\lambda^2-a^2 \varepsilon^2)t}  (e^{O(\xi^3 t)}P(i\xi)-\tilde{P}(i\xi))  \, d\xi \\
& + \dfrac{1}{2 \pi} \int_{-\delta}^\delta R_{-}(i \xi) e^{F_{-}(i \xi)t} L_{-}(i \xi) e^{i\xi x} \, d\xi.
\end{aligned}
\end{equation}
We need an estimate for the remainder above. First of all, from (\ref{F_meno_approximation_JinXin}) and (\ref{Compl_total_projector_JinXin}),
\begin{align*}
\Bigg| \dfrac{1}{2 \pi} \int_{-\delta}^\delta R_{-}(i \xi) e^{F_{-}(i \xi)t} L_{-}(i \xi) e^{i\xi x} \, d\xi \Bigg| & \le \Bigg| \dfrac{1}{2 \pi} \int_{-\delta}^\delta P_{-}(i \xi) e^{(-1/\varepsilon^2+a i \xi +O(\xi^2))t} e^{i\xi x} \, d\xi \Bigg| \\\\
& \le C e^{-t/\varepsilon^2}
\end{align*}
for some constant $C$. 
Following \cite{Bianchini},
$$|e^{O(\xi^3 t)} P(i \xi)-\tilde{P}(i \xi)| = |z^3|t e^{2 \mu |z|^2 t} \left(\begin{array}{cc}
O(1) \quad &O( \varepsilon ) |z| \\
O(\varepsilon) |z| \quad &  O(\varepsilon^2) |z|^2
\end{array}\right),$$
for a constant $\mu > 0$. 
This way, 
$$R_1(t, x) = e^{-(x-at)^2/(ct)} \left(\begin{array}{cc}
O(1)(1+t)^{-1} \quad &  O(\varepsilon)  (1+t)^{-3/2} \\
O(\varepsilon)  (1+t)^{-3/2}  \quad & O(\varepsilon^2) (1+t)^{-2}
\end{array}\right).$$

\paragraph{Green function estimates near $z=\infty$}
We associate to (\ref{eig_zeta_infty_JinXin}) the following equations:
\begin{align*}
\partial_t w + \dfrac{\lambda}{\varepsilon} \partial_x w = -\dfrac{\lambda-a \varepsilon}{2 \lambda \varepsilon^2} w, \quad
\partial_t w - \dfrac{\lambda}{\varepsilon} \partial_x w = -\dfrac{\lambda+a \varepsilon}{2 \lambda \varepsilon^2} w.
\end{align*}
We can write explicitly the solutions
\begin{align*}
g_1(t, x)=\delta (x-\lambda t/ \varepsilon ) e^{-(\lambda-a \varepsilon)t/(2 \lambda \varepsilon^2) }, \quad
 g_2(t, x)=\delta (x+\lambda t/ \varepsilon ) e^{-(\lambda+a \varepsilon)t/(2 \lambda \varepsilon^2) }.
\end{align*}
Thus, 
$$|g_{j}(t, x)| \le C \delta( x \pm \lambda t/\varepsilon) e^{-c t/\varepsilon^2 }, \quad j=1, 2.$$
We determine the Fourier transform of the Green function for $|z|$ going to infinity, 
\begin{equation}
\label{K_storto_JinXin}
\hat{\mathcal{K}}(t, \xi) =  \exp \Bigg\{- i \dfrac{\lambda t \xi}{\varepsilon}   - \dfrac{(\lambda - a \varepsilon)t}{2 \lambda \varepsilon^2} \Bigg\}\mathcal{P}_1(\infty) + \exp \Bigg\{ i \dfrac{\lambda t \xi}{\varepsilon}   - \dfrac{(\lambda + a \varepsilon)t}{2 \lambda \varepsilon^2}  \Bigg\}\mathcal{P}_2(\infty).
\end{equation}
This way, from Proposition \ref{Proposition_z_infinity}, the remainder term here is
\begin{equation}\label{Remainder2_Jin_Xin}
R_2(t, x)=\dfrac{1}{2 \pi} \int_{|\xi| \ge N} (e^{E(i \xi)t}-\hat{\mathcal{K}}(t, \xi)) e^{i\xi x} \, d\xi, \qquad \text{and}
\end{equation}
\begin{align*}
|R_2| & \le \dfrac{1}{2 \pi} \Bigg|\int_{|\xi| \ge N} e^{i\xi (x- \lambda t/\varepsilon)-(\lambda - a \varepsilon)t/(2 \lambda \varepsilon^2)} \cdot (e^{O(1)t/(i\xi) + O(1)t/\xi^2} \mathcal{P}_{1}(i\xi)-\mathcal{P}_{1}(\infty)) \, d\xi \Bigg|\\
& + \dfrac{1}{2 \pi} \Bigg|\int_{|\xi| \ge N} e^{i\xi (x+\lambda t/\varepsilon)-(\lambda+a \varepsilon)t/(2 \lambda \varepsilon^2)} \cdot (e^{O(1)t/(i\xi) + O(1)t/\xi^2} \mathcal{P}_{2}(i\xi)-\mathcal{P}_{2}(\infty)) \, d\xi \Bigg|.
\end{align*}
Following \cite{Bianchini} and thanks to Remark \ref{remark_zeta_infinity},
\begin{align*}
|R_2(t, x)| &\le Ce^{-ct/\varepsilon^2} \Bigg[ \Bigg| \int_{|\xi| \ge N} \dfrac{e^{i\xi(x\pm \lambda t/\varepsilon)}}{\xi} \, d\xi \Bigg| + \int_{|\xi|\ge N} \dfrac{1}{\xi^2} \, d\xi\Bigg] \le Ce^{-ct/\varepsilon^2}.
\end{align*}

\paragraph{Remainders in between}
\label{Remainders_between}
Until now, we studied the Green function of the linearized diffusive Jin-Xin system for $z \approx 0,$ which yields the parabolic kernel $\hat{K}$ in (\ref{K_dritto_JinXin_Fourier}), and for $z \approx \infty,$ so obtaining $\hat{\mathcal{K}}$ in (\ref{K_storto_JinXin}). In these two cases, we also provided estimates for the remainder terms:
\begin{itemize}
\item $R_1$ in (\ref{Remainder_1_JinXin}) for the parabolic kernel $K$ for $|\xi| \le \delta,$ with $\delta$ sufficiently small;

\item $R_2$ in (\ref{Remainder2_Jin_Xin}) for the transport kernel $\mathcal{K}$ for $|\xi| \ge N,$ with $N$ big enough.
\end{itemize}
It remains to estimate the last remainder terms, namely the parabolic kernel $K$ for $|\xi| \ge \delta, \; t \ge 1,$ the transport kernel $\mathcal{K}$ for $|\xi| \le N,$ and the kernel $E(z)$ for $\delta \le |\xi| \le N.$
\paragraph{Parabolic kernel $K(t, x)$ for $|\xi| \ge \delta,$ $\delta <<1$ }
Let us define
\begin{equation}
\label{Remainder_3_Jin_Xin}
R_3(t, x)=\dfrac{1}{2 \pi} \int_{|\xi| \ge \delta} \hat{K}(t, \xi) e^{i\xi x}\, d\xi.
\end{equation}
Thus, from (\ref{K_dritto_JinXin_Fourier}), for $t \ge 1,$
\begin{align*}
|R_3(t, x)|&\le C \Bigg| \int_{|\xi|\ge \delta} e^{i\xi(x-at)} e^{-(\lambda^2-a^2 \varepsilon^2)\xi^2 t} \tilde{P}(i \xi) \, d\xi \Bigg|\\
&\le \dfrac{C e^{-t/C}}{\sqrt{t}}\left(\begin{array}{cc}
O(1) \quad & O( \varepsilon)   \\
O(\varepsilon)  \quad & O( \varepsilon^2) 
\end{array}\right).
\end{align*}
\paragraph{Transport kernel for $|\xi| \le N$}
Set
\begin{equation}
\label{Remainder_4_Jin_Xin}
R_4(t, x)=\dfrac{1}{2\pi} \int_{|\xi| \le N} \hat{\mathcal{K}}(t, \xi) e^{i\xi x} \, d\xi,
\end{equation}

and, from (\ref{K_storto_JinXin}),
\begin{align*}
|R_4(t, x)| & \le C e^{-(\lambda+|a|\varepsilon)t/(2 \lambda \varepsilon^2)} \sum \Bigg| \int_{-N}^N e^{i\xi (x\pm \lambda t/\varepsilon)} d\xi \Bigg| \\ 
&\le  C e^{-ct/\varepsilon^2} \min \Bigg\{ N, \dfrac{1}{|x\pm \lambda t/\varepsilon|} \Bigg\}.
\end{align*}
\paragraph{Kernel $E(z)$ for $\delta \le |\xi| \le N$}
Finally, we set
$$R_5(t, x)=\dfrac{1}{2\pi}\int_{\delta \le |\xi| \le N} e^{E(i\xi)t} e^{i\xi t} d\xi.$$
Differently from \cite{Bianchini}, here we cannot simply apply the (SK) condition, as mentioned in the Introduction, and a further analysis is needed.
The eigenvalues of $E(i\xi)=-i\xi A-B$ are expressed here:
\begin{equation}
\label{Eig_SK}
\lambda_{1/2}=\dfrac{1}{2 \varepsilon^2}\Bigg(-1 \pm \sqrt{1-4\varepsilon^2(ia\xi +\lambda^2\xi^2)}\Bigg)=\dfrac{- 2(a i \xi + \lambda^2 \xi^2)}{1 \pm \sqrt{1-4\varepsilon^2(a i \xi + \lambda^2 \xi^2)}}.
\end{equation}
By using the Taylor expansion for $\varepsilon \approx 0,$
$$\lambda_1= -\dfrac{ai \xi + \lambda^2 \xi^2}{1-\varepsilon^2 (a i \xi + \lambda^2 \xi^2)}, \quad \lambda_{2}=-\dfrac{1}{\varepsilon^2}.$$

Explicitly, denoting by 
$$\bigtriangleup=\sqrt{1-4\varepsilon^2 \xi (\lambda^2 \xi+ia)}, \;\;\;\; \Box_1=-1+\bigtriangleup+2ia\xi \varepsilon^2, \;\;\;\; \Box_2=1+\bigtriangleup-2ia\xi \varepsilon^2,$$
one can find that $e^{E(i\xi)t}$
\begin{equation*}
\begin{aligned}
&=\left(\begin{array}{cc}
\dfrac{e^{\lambda_2 t}}{2 \bigtriangleup}\Box_1+\dfrac{e^{\lambda_1 t}}{2\bigtriangleup}\Box_2 & -\dfrac{i\xi \varepsilon \sqrt{\lambda^2-a^2\varepsilon^2}(e^{\lambda_1 t}-e^{\lambda_2 t})}{\bigtriangleup}\\\\
-\dfrac{i\xi \varepsilon \sqrt{\lambda^2-a^2\varepsilon^2}(e^{\lambda_1 t}-e^{\lambda_2 t})}{\bigtriangleup} & \dfrac{e^{\lambda_1 t}}{2 \bigtriangleup}\Box_1+\dfrac{e^{\lambda_2 t}}{2\bigtriangleup}\Box_2
\end{array}\right), 
\end{aligned}
\end{equation*}
$$\text{where} \quad \Box_1=-1+\bigtriangleup=-2\varepsilon^2 \xi (\lambda^2 \xi+ia)+O(\varepsilon^2)=O(\varepsilon^2), \quad \Box_2=1+\bigtriangleup=O(1),$$
and, in terms of the singular parameter $\varepsilon,$ this yields
\begin{equation*}
e^{E(i\xi)t}=\left(\begin{array}{cc}
O(1) (e^{\lambda_1 t}+e^{\lambda_2 t}) & O(\varepsilon) (e^{\lambda_1 t}-e^{\lambda_2 t})\\\\
O(\varepsilon) (e^{\lambda_1 t}-e^{\lambda_2 t}) & e^{\lambda_1t}O(\varepsilon^2)+O(1)e^{\lambda_2t}
\end{array}\right).
\end{equation*}
Putting the calculations above all together and integrating in space with respect to the Fourier variable for $\delta \le |\xi| \le N,$ we get
\begin{equation}
\label{Remainder_5_Jin_Xin}
|R_5(t, x)| \le C \left(\begin{array}{cc}
O(1)e^{-t/C} & O(\varepsilon) e^{-t/C}\\\\
O(\varepsilon) e^{-t/C} & O(\varepsilon^2)e^{-t/C}+O(1)e^{-t/\varepsilon^2}
\end{array}\right).
\end{equation}
From (\ref{Remainder_1_JinXin}), (\ref{Remainder2_Jin_Xin}), (\ref{Remainder_3_Jin_Xin}), (\ref{Remainder_4_Jin_Xin}), (\ref{Remainder_5_Jin_Xin}), we denote the remainder by
\begin{equation}
\label{Remainder_total}
R(t)=R_1(t)+R_2(t)+R_3(t)+R_4(t)+R_5(t).
\end{equation}
The estimates above provide the following lemma.
\begin{lemma}
\label{Decomposition_lemma}
Let $\Gamma(t, x)$ be the Green function of the linear system (\ref{CD_real_JinXin_no_tilde}). We have the following decomposition:
$$\Gamma(t, x)=K(t, x)+\mathcal{K}(t,x)+R(t, x),$$
with $K(t, x), \mathcal{K}(t, x), R(t, x)$ in (\ref{K_dritto_JinXin}), (\ref{K_storto_JinXin}) and (\ref{Remainder_total}) respectively. Moreover, for some constants $c, C,$
\begin{itemize}
\item $ |K(t, x)| \le e^{-(x-at)^2/(ct)}  \left(\begin{array}{cc}
O(1)(1+t)^{-1} \quad &  O(\varepsilon)  (1+t)^{-3/2} \\
O(\varepsilon)  (1+t)^{-3/2}  \quad & O(\varepsilon^2) (1+t)^{-2}
\end{array}\right);$
\item $ |\mathcal{K}(t, x)| \le C e^{-ct/\varepsilon^2};$
\item 
$ 
\begin{aligned}
|R(t)| \le e^{-(x-at)^2/(ct)} & \quad \left(\begin{array}{cc}
O(1)(1+t)^{-1} \quad &  O(\varepsilon)  (1+t)^{-3/2} \\
O(\varepsilon)  (1+t)^{-3/2}  \quad & O(\varepsilon^2) (1+t)^{-2}
\end{array}\right)\\
&+\left(\begin{array}{cc}
O(1) & O(\varepsilon) \\
O(\varepsilon) & O(\varepsilon^2)
\end{array}\right)e^{-ct}+ Id \; e^{-ct/\varepsilon^2}.
\end{aligned}.$
\end{itemize}
\end{lemma}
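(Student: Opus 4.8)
The plan is to assemble the statement from the frequency-localized analysis of Section~\ref{Section3}, starting from the inversion formula
$$\Gamma(t,x)=\frac{1}{2\pi}\int_{\mathbb{R}}e^{E(i\xi)t}\,e^{i\xi x}\,d\xi=\frac{1}{2\pi}\Big(\int_{|\xi|\le\delta}+\int_{\delta\le|\xi|\le N}+\int_{|\xi|\ge N}\Big)e^{E(i\xi)t}\,e^{i\xi x}\,d\xi,$$
with $\delta$ small and $N$ large as fixed in the previous paragraphs. On the low- and high-frequency pieces one isolates a ``leading'' kernel and estimates the rest as a remainder; on the middle band the whole integrand is a remainder. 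Since the three integrals reconstruct $\Gamma$ exactly, this automatically produces the identity $\Gamma=K+\mathcal{K}+R$ with $R=R_1+R_2+R_3+R_4+R_5$, and it only remains to record the bounds.

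On $|\xi|\le\delta$ I would use Proposition~\ref{Proposition_z_small} to write $e^{E(i\xi)t}=\hat g(i\xi)\tilde P(i\xi)+R_{-}(i\xi)e^{F_{-}(i\xi)t}L_{-}(i\xi)+\hat R_1(t,i\xi)$. Inverting the first summand gives exactly the matrix $K(t,x)$ of (\ref{K_dritto_JinXin}), whose entries are $g$, $\varepsilon\sqrt{\lambda^2-a^2\varepsilon^2}\,\partial_x g$ and $\varepsilon^2(\lambda^2-a^2\varepsilon^2)\,\partial_{xx}g$ for the heat kernel $g$ of (\ref{z_piccolo_JinXin}); the Gaussian bound (\ref{z_piccolo_JinXin_estimate}) together with $|\partial_x g|\le Ct^{-1}e^{-(x-at)^2/(ct)}$, $|\partial_{xx}g|\le Ct^{-3/2}e^{-(x-at)^2/(ct)}$ and the trivial bound for $t\le1$ gives the asserted estimate for $|K(t,x)|$, with the weights $O(1),O(\varepsilon),O(\varepsilon^2)$ landing on the corresponding entries. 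What is left on $|\xi|\le\delta$ is $R_1$ of (\ref{Remainder_1_JinXin}): the $R_{-}e^{F_{-}t}L_{-}$ part is bounded by $Ce^{-t/\varepsilon^2}$ because $F_{-}(i\xi)=-1/\varepsilon^2+ai\xi+O(\xi^2)$ and $P_{-}$ is bounded in $\varepsilon$, while $|e^{O(\xi^3t)}P(i\xi)-\tilde P(i\xi)|$ is $O(|\xi|^3t)\,e^{2\mu|\xi|^2t}$ times the weight matrix with entries $O(1),O(\varepsilon)|\xi|,O(\varepsilon^2)|\xi|^2$, whose inverse transform yields the Gaussian-times-$(1+t)^{-k/2}$ contribution with the right powers of $\varepsilon$.

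On $|\xi|\ge N$, Proposition~\ref{Proposition_z_infinity} gives $e^{E(i\xi)t}=\hat{\mathcal K}(t,\xi)+(\text{remainder})$ with $\hat{\mathcal K}$ in (\ref{K_storto_JinXin}); inverting, $\mathcal K(t,x)$ is a sum of two translated Dirac masses weighted by $e^{-(\lambda\mp a\varepsilon)t/(2\lambda\varepsilon^2)}$, so $|\mathcal K(t,x)|\le Ce^{-ct/\varepsilon^2}$, the point being Remark~\ref{remark_zeta_infinity}: although the correction $O(1/z)$ in $\lambda_{1,2}(z)$ may be singular in $\varepsilon$, $\mathrm{Re}\,\lambda_{1,2}(i\xi)<0$ uniformly, so $e^{\lambda_{1,2}(i\xi)t}$ stays bounded, and the leftover is $R_2$ of (\ref{Remainder2_Jin_Xin}), again $\le Ce^{-ct/\varepsilon^2}$. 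It remains to treat the three ``remainders in between'': the parabolic tail $R_3$ on $|\xi|\ge\delta$, which for $t\ge1$ is $Ce^{-t/C}t^{-1/2}$ times a matrix with entries $O(1),O(\varepsilon),O(\varepsilon^2)$ and is harmless for $t\le1$; the transport tail $R_4$ on $|\xi|\le N$, bounded by $Ce^{-ct/\varepsilon^2}\min\{N,|x\pm\lambda t/\varepsilon|^{-1}\}\le Ce^{-ct/\varepsilon^2}$; and the genuinely new term $R_5$, the inverse transform of $e^{E(i\xi)t}$ over the compact band $\delta\le|\xi|\le N$. For $R_5$ I would not use the Shizuta--Kawashima condition, which (as explained in the Introduction) erases the $\varepsilon$-weights, but the explicit diagonalization of $e^{E(i\xi)t}$ displayed before (\ref{Remainder_5_Jin_Xin}): on this band $\mathrm{Re}\,\lambda_1(i\xi)\le-c$ and $\lambda_2(i\xi)=-1/\varepsilon^2+O(1)$, while $\Box_1=O(\varepsilon^2)$ and $\Box_2=O(1)$ force the entries to carry weights $O(1),O(\varepsilon),O(\varepsilon^2)$; integrating yields (\ref{Remainder_5_Jin_Xin}). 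Summing $R_1,\dots,R_5$ and grouping the three decay mechanisms---Gaussian in $x$ with algebraic decay in $t$, pure $e^{-ct}$, and $e^{-ct/\varepsilon^2}$ (the last written as $Id\,e^{-ct/\varepsilon^2}$)---gives the stated bound for $|R(t)|$.

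The main obstacle I anticipate is the uniform bookkeeping of the $\varepsilon$-weights across the three regimes and, crucially, at the cut-offs $|\xi|=\delta$ and $|\xi|=N$: one has to check that the off-diagonal and $(2,2)$ entries of $K$, of $\mathcal K$, and of each $R_j$ genuinely come with a factor $\varepsilon$ and $\varepsilon^2$ respectively, never with $\varepsilon^{-1}$. This rests on the structural facts from Section~\ref{Section2}: $P(z)$, $\tilde P(z)$, the $z=\infty$ eigenprojectors $R_jR_j^T$, and the diagonalized $e^{E(i\xi)t}$ are all bounded in $\varepsilon$ even as $\zeta\to0$, and Remark~\ref{remark_zeta_infinity} controls the high-frequency eigenvalue corrections. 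The exponentials $e^{-ct/\varepsilon^2}$ appearing in $R_1,R_2,R_4,R_5$ are precisely what absorbs any residual singular behavior, since $e^{-ct/\varepsilon^2}$ is not merely bounded but rapidly decaying as $\varepsilon\to0$.
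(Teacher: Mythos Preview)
Your proposal is correct and follows essentially the same approach as the paper: the lemma is precisely the summary of the frequency-localized analysis carried out in Section~\ref{Section3}, and you have accurately reconstructed each piece (the extraction of $K$ and $\mathcal K$, the five remainders $R_1,\dots,R_5$, the explicit diagonalization for $R_5$ in place of the (SK) condition, and the $\varepsilon$-weight bookkeeping via Remark~\ref{remark_zeta_infinity}). The paper's own proof is simply the sentence ``the estimates above provide the following lemma,'' so your write-up is in fact more detailed than the original.
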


\paragraph{Decay estimates}
Let us consider the solution to the Cauchy problem associated with the linear system (\ref{CD_real_JinXin_no_tilde}) and initial data $\textbf{w}_0,$
$$\hat{\textbf{w}}(t, \xi)=\hat{\Gamma}(t, \xi) \hat{\textbf{w}}_0(\xi)=e^{E(i\xi)t} \hat{\textbf{w}}_0(\xi).$$ By using the decomposition provided by Lemma \ref{Decomposition_lemma}, we get the following theorem.
\begin{theorem}
\label{Decay_estimates_JinXin}
Consider the linear system in (\ref{CD_real_JinXin_no_tilde}), i.e.
$$\partial_t \textbf{w}+A\partial_x \textbf{w}=-B\textbf{w},$$
and let $Q_0=R_0L_0$ and $Q_{-}=R_{-}L_{-}$ as before, i.e. the eigenprojectors onto the null space and the negative definite part of $-B$ respectively. Then, for any function $\textbf{w}_0 \in L^1 \cap L^2 (\mathbb{R}, \mathbb{R}),$ the solution $\textbf{w}(t)=\Gamma (t) \textbf{w}_0$ to the related Cauchy problem can be decomposed as
$$\textbf{w}(t)=\Gamma(t) \textbf{w}_0=K(t)\textbf{w}_0+\mathcal{K}(t)\textbf{w}_0+R(t) \textbf{w}_0.$$
Moreover, for any index $\beta$, the following estimates hold:\begin{equation}
\label{K_dritto_beta_estimates_L0_JinXin}
\begin{aligned}
\|L_0 D^\beta K(t) \textbf{w}_0\|_0 &\le C \min\{1, t^{-1/4-|\beta|/2}\} \|L_0 \textbf{w}_0\|_{L^1}\\
&+C \varepsilon \min\{ 1, t^{-3/4-|\beta|/2}\} \| L_{-} \textbf{w}_0\|_{L^1},
\end{aligned}
\end{equation}
\begin{equation}
\label{K_dritto_beta_estimates_Lmeno_JinXin}
\begin{aligned}
\|L_{-} D^\beta K(t) \textbf{w}_0\|_0 &\le C \varepsilon \min\{1, t^{-3/4-|\beta|/2}\} \|L_{0} \textbf{w}_0\|_{L^1}\\
&+C \varepsilon^2 \min\{ 1, t^{-5/4-|\beta|/2}\} \|L_{-} \textbf{w}_0\|_{L^1},
\end{aligned}
\end{equation}
\begin{equation}
\label{K_storto_beta_estimates_JinXin}
\|D^\beta \mathcal{K}(t) \textbf{w}_0\|_0 \le C e^{-ct/\varepsilon^2}\|D^\beta \textbf{w}_0\|_0,
\end{equation}
\begin{equation}
\label{remainder_total_beta_estimate_L0}
\begin{aligned}
\|L_0 D^\beta R(t) \textbf{w}_0\|_0 & \le C \min\{1, t^{-1/4-|\beta|/2}\} \|L_0 \textbf{w}_0\|_{L^1}\\
&+C \varepsilon \min\{ 1, t^{-3/4-|\beta|/2}\} \| L_{-} \textbf{w}_0\|_{L^1}\\
&+C e^{-ct} \|L_0 \textbf{w}_0\|_{L^1}+C\varepsilon e^{-ct}\|L_{-}\textbf{w}_0\|_{L^1}+Ce^{-ct/\varepsilon^2} \|\textbf{w}_0\|_{L^1},
\end{aligned}
\end{equation}
\begin{equation}
\label{remainder_total_beta_estimate_Lmeno}
\begin{aligned}
\|L_{-} D^\beta R(t) \textbf{w}_0\|_0 &\le C \varepsilon \min\{1, t^{-3/4-|\beta|/2}\} \|L_{0} \textbf{w}_0\|_{L^1}\\
&+C \varepsilon^2 \min\{ 1, t^{-5/4-|\beta|/2}\} \|L_{-} \textbf{w}_0\|_{L^1}\\
&+ C \varepsilon e^{-ct} \|L_0 \textbf{w}_0\|_{L^1}+C\varepsilon^2 e^{-ct}\|L_{-}\textbf{w}_0\|_{L^1}+Ce^{-ct/\varepsilon^2} \|\textbf{w}_0\|_{L^1}.
\end{aligned}
\end{equation}
\end{theorem}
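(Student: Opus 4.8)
The plan is to plug the kernel decomposition of Lemma~\ref{Decomposition_lemma} into the representation $\hat{\textbf{w}}(t,\xi)=e^{E(i\xi)t}\hat{\textbf{w}}_0(\xi)$ and to estimate the three operators $K(t)$, $\mathcal{K}(t)$ and $R(t)=\sum_{j=1}^{5}R_j(t)$ separately. For each of them the mechanism is the same: turn the pointwise matrix-valued bounds of Lemma~\ref{Decomposition_lemma} into $L^1\!\to\!L^2$ and $L^2\!\to\!L^2$ operator bounds, using Young's convolution inequality for the genuinely integrable (Gaussian-type) kernels and Plancherel/Hausdorff--Young for the frequency-localized pieces, while carrying along the weights in $\varepsilon$ carried by the matrix entries. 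Projecting the result with $L_0=(1,0)$ and $L_{-}=(0,1)$ then splits each bound into the two families (\ref{K_dritto_beta_estimates_L0_JinXin})--(\ref{K_dritto_beta_estimates_Lmeno_JinXin}) and (\ref{remainder_total_beta_estimate_L0})--(\ref{remainder_total_beta_estimate_Lmeno}), the $L_{-}$-component gaining systematically one power of $\varepsilon$ and one extra half power $t^{-1/2}$ over the $L_0$-component.

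\emph{The kernel $K$.} From (\ref{K_dritto_JinXin}), the $(i,j)$-entry of $K(t,x)$ equals $\varepsilon^{i+j-2}(\lambda^2-a^2\varepsilon^2)^{(i+j-2)/2}\,\partial_x^{i+j-2}g(t,x)$ with $g$ the scalar heat kernel (\ref{z_piccolo_JinXin}). Writing $\textbf{w}_0=(L_0\textbf{w}_0,\,L_{-}\textbf{w}_0)^T$, one obtains $L_0D^\beta K(t)\textbf{w}_0=D^\beta g(t)\ast(L_0\textbf{w}_0)+\varepsilon\sqrt{\lambda^2-a^2\varepsilon^2}\,D^\beta\partial_x g(t)\ast(L_{-}\textbf{w}_0)$, and likewise $L_{-}D^\beta K(t)\textbf{w}_0$ with one more $x$-derivative and one more power of $\varepsilon$ in each summand. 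Applying $\|D^\beta\partial_x^m g(t)\ast\phi\|_0\le\|\partial_x^{|\beta|+m}g(t)\|_0\,\|\phi\|_{L^1}$ together with the Gaussian bound $\|\partial_x^{k}g(t)\|_0\le C\,t^{-1/4-k/2}$ for $t\ge1$ (for $t\le1$ the minimum in (\ref{K_dritto_beta_estimates_L0_JinXin}) equals $1$ and one invokes instead the uniform $L^2\!\to\!L^2$ bound on the smoothing operator $D^\beta K(t)$), the derivative orders $0,1,2$ produce exactly the decay exponents $-1/4,-3/4,-5/4$ and the entry weights $1,\varepsilon,\varepsilon^2$ give (\ref{K_dritto_beta_estimates_L0_JinXin}) and (\ref{K_dritto_beta_estimates_Lmeno_JinXin}).

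\emph{The kernels $\mathcal{K}$ and $R$.} For $\mathcal{K}$ one uses that the symbol (\ref{K_storto_JinXin}) is a pure phase $e^{\mp i\lambda t\xi/\varepsilon}$ times $e^{-ct/\varepsilon^2}$ times the projectors $\mathcal{P}_j(\infty)=R_jR_j^T$, which are bounded in $\varepsilon$ by Proposition~\ref{Proposition_z_infinity}; Plancherel gives (\ref{K_storto_beta_estimates_JinXin}) at once. For $R=\sum_j R_j$ one uses the three-block bound of Lemma~\ref{Decomposition_lemma}: the parabolic Gaussian block (from $R_1,R_3$) is treated exactly as $K$, producing the $t^{-1/4-|\beta|/2},t^{-3/4-|\beta|/2},t^{-5/4-|\beta|/2}$ terms with weights $1,\varepsilon,\varepsilon^2$; the $e^{-ct}$ matrix block (from $R_3,R_5$) and the $Id\,e^{-ct/\varepsilon^2}$ block (from $R_2,R_4,R_5$) are controlled by Young's inequality when the kernel is integrable and, for frequency-localized pieces such as $R_5$ (whose kernel need not lie in $L^1_x$), by $\|D^\beta R_j(t)\textbf{w}_0\|_0\le\big(\sup_\xi|\xi|^{|\beta|}|\widehat{R_j}(t,\xi)|\big)\,|\operatorname{supp}\widehat{R_j}|^{1/2}\,\|\textbf{w}_0\|_{L^1}$ using $\|\hat{\textbf{w}}_0\|_{\infty}\le\|\textbf{w}_0\|_{L^1}$. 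Collecting and projecting with $L_0$ and $L_{-}$ yields (\ref{remainder_total_beta_estimate_L0})--(\ref{remainder_total_beta_estimate_Lmeno}).

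\emph{Main difficulty.} The only non-routine point is ensuring that every constant is uniform in $\varepsilon$, which is delicate precisely in the intermediate band $\delta\le|\xi|\le N$, where the Shizuta--Kawashima bound cannot be used uniformly because the two eigenvalues (\ref{Eig_SK}) of $E(i\xi)$ scale differently ($\lambda_1=O(1)$ versus $\lambda_2\approx-1/\varepsilon^2$). This is handled by the explicit diagonalization in the paragraph \emph{Remainders in between} and the resulting bound (\ref{Remainder_5_Jin_Xin}): one must keep the off-diagonal $O(\varepsilon)$ factors, accept the coexistence of $e^{-t/C}$ and $e^{-t/\varepsilon^2}$, and invoke Remark~\ref{remark_zeta_infinity} to guarantee $\varepsilon$-boundedness of the high-frequency exponentials. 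The rest is careful bookkeeping, verifying that projecting by $L_{-}$ always trades one power $t^{-1/2}$ and one power $\varepsilon$ against the $L_0$-component; this quantitative gain is the ``faster decay of the dissipative variable'' and is exactly the structural input that will let the nonlinear estimates close later.
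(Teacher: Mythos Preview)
Your proposal is correct and follows essentially the same route as the paper's own proof: exploit the entrywise $\varepsilon$-weighted bounds of Lemma~\ref{Decomposition_lemma}, project with $L_0,L_{-}$, and convert the pointwise kernel estimates into $L^1\!\to\!L^2$ bounds. The only cosmetic difference is that the paper carries out the $K$-estimate entirely on the Fourier side (bounding $|\hat{\textbf{w}}_0(\xi)|\le\|\textbf{w}_0\|_{L^1}$ and integrating $|\xi|^{2\beta}e^{-2c|\xi|^2t}$), whereas you phrase the same computation in physical space via Young's inequality and $\|\partial_x^k g(t)\|_0$; these are equivalent by Plancherel, and your more detailed treatment of the frequency-localized remainder pieces is consistent with what the paper summarizes as ``obtained in a similar way.''
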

\begin{proof}
From Lemma \ref{Decomposition_lemma}, for some constants $c, C >0,$ and for an index $\beta,$ it holds
\begin{equation}
\label{parabolic_kernel_L2_estimate_JinXin}
\|D^\beta \mathcal{K}(t) \textbf{w}_0\|_0 \le C e^{-ct/\varepsilon^2} \|D^\beta \textbf{w}_0\|_0.
\end{equation}
On the other hand, the hyperbolic kernel (\ref{K_dritto_JinXin_Fourier}) can be estimated as
\begin{align*}
& |L_0 \widehat{K(t)\textbf{w}_0}|\le C e^{-c|\xi|^2 t} (|L_0 \hat{\textbf{w}}_0|+\varepsilon|\xi| |L_{-}\hat{\textbf{w}}_0|), \\
& |L_{-} \widehat{K(t)\textbf{w}_0}|\le C e^{-c|\xi|^2 t} (\varepsilon|\xi||L_0 \hat{\textbf{w}}_0|+\varepsilon^2|\xi|^2 |L_{-}\hat{\textbf{w}}_0|).
\end{align*}
This yields
\begin{align*}
\|L_0 K(t) \textbf{w}_0 \|_0^2 &\le C \int_0^\infty \int_{S^0} e^{-2c|\xi|^2t} (|L_0\hat{\textbf{w}}_0(\xi)|^2 + \varepsilon^2|\xi|^2 |L_{-}\hat{\textbf{w}}_0(\xi)|^2) \, d\zeta d\xi \\
& \le C \min\{1, t^{-1/2}\} \|L_0 \hat{\textbf{w}}_0\|_\infty^2 + C \varepsilon^2 \min \{1, t^{-3/2}\} \|L_{-}\hat{\textbf{w}}_0\|_\infty^2 \\
& \le C \min\{1, t^{-1/2}\} \|L_0 {\textbf{w}}_0\|_{L^1}^2 + C \varepsilon^2 \min \{1, t^{-3/2}\} \|L_{-}{\textbf{w}}_0\|_{L^1}^2,
\end{align*}
and 
\begin{align*}
\|L_{-}K(t)\textbf{w}_0\|_0^2 & \le C \int_0^\infty \int_{S^0} e^{-2c|\xi|t} (\varepsilon^2 |\xi|^2 |L_0 \hat{\textbf{w}}_0(\xi)|^2+\varepsilon^4 |\xi|^2 |L_{-}\hat{\textbf{w}}_0(\xi)|^2) \, d\zeta d \xi \\
& \le C \varepsilon^2 \min\{1, t^{-3/2}\} \|L_0 \textbf{w}_0\|_{L^1}^2 + C \varepsilon^4 \min\{1, t^{-5/2}\} \|L_{-}\textbf{w}_0\|_{L^1}^2.
\end{align*}
Besides, for every $\beta$ we multiply by $\xi^{2\beta}$ the integrand and we get
\begin{equation*}
\begin{aligned}
\|L_0 D^\beta K(t) \textbf{w}_0\|_0 &\le C \min\{1, t^{-1/4-|\beta|/2}\} \|L_0 \textbf{w}_0\|_{L^1}\\
&+C \varepsilon \min\{ 1, t^{-3/4-|\beta|/2}\} \| L_{-} \textbf{w}_0\|_{L^1},
\end{aligned}
\end{equation*}
\begin{equation*}
\begin{aligned}
\|L_{-} D^\beta K(t) \textbf{w}_0\|_0 & \le C \varepsilon \min\{1, t^{-3/4-|\beta|/2}\} \|L_{0} \textbf{w}_0\|_{L^1}\\
&+C \varepsilon^2 \min\{ 1, t^{-5/4-|\beta|/2}\} \|L_{-} \textbf{w}_0\|_{L^1}.
\end{aligned}
\end{equation*}
The estimates for $R(t)$ are obtained in a similar way.
\end{proof}
\section{Decay estimates and convergence} \label{Section4}
Consider the local solution $\textbf{w}$ to the Cauchy problem associated with (\ref{CD_real_JinXin}), where we drop the \emph{tilde}, and initial data $\textbf{w}_0$. The solution to the nonlinear problem (\ref{CD_real_JinXin}) can be expressed by using the Duhamel formula \begin{equation}
\label{Duhamel}
\begin{aligned}
\textbf{w}(t)&=\Gamma(t)\textbf{w}_0+\int_0^t \Gamma (t-s) (N(w_1(s))-DN(0)w_1(s)) \, ds \\
& = \Gamma(t)\textbf{w}_0 + \int_0^t \Gamma(t-s) \left(\begin{array}{c}
0 \\
\dfrac{h(w_1(s))}{\varepsilon \sqrt{\lambda^2 - a^2 \varepsilon^2}} 
\end{array}\right) \, ds \quad t \in [0, T^*].
\end{aligned}
\end{equation} From (\ref{Pn_JinXin}) and the formulas below, we recall that
$w_1=L_0\textbf{w}=(1-L_{-})\textbf{w}$ is the conservative variable, while $w_2=L_{-}\textbf{w}$ is the dissipative one. We remind the Green function decomposition given by Lemma \ref{Decomposition_lemma}. For the $\beta$-derivative, 
\begin{align*}
D^\beta \textbf{w}(t)&=D^\beta K(t) \textbf{w}(0)+\mathcal{K}(t)D^\beta \textbf{w}(0)+R(t) D^\beta \textbf{w}(0) \\
&+\int_0^{t/2} D^\beta K(t-s)  R_{-}L_{-} \left(\begin{array}{c}
0 \\
\dfrac{h(w_1(s))}{\varepsilon \sqrt{\lambda^2 - a^2 \varepsilon^2}} 
\end{array}\right) \, ds \\
&+\int_{t/2}^t K(t-s) R_{-} D^\beta L_{-} \left(\begin{array}{c}
0 \\
\dfrac{h(w_1(s))}{\varepsilon \sqrt{\lambda^2 - a^2 \varepsilon^2}} 
\end{array}\right) \, ds \\
&+\int_0^t \mathcal{K}(t-s)D^\beta \left(\begin{array}{c}
0 \\
\dfrac{h(w_1(s))}{\varepsilon \sqrt{\lambda^2 - a^2 \varepsilon^2}} 
\end{array}\right) \, ds \\
&+\int_0^{t/2} D^\beta R(t-s)R_{-}L_{-} \left(\begin{array}{c}
0 \\
\dfrac{h(w_1(s))}{\varepsilon \sqrt{\lambda^2 - a^2 \varepsilon^2}} 
\end{array}\right) \, ds \\
&+ \int_{t/2}^t R(t-s)R_{-} D^\beta L_{-} \left(\begin{array}{c}
0 \\
\dfrac{h(w_1(s))}{\varepsilon \sqrt{\lambda^2 - a^2 \varepsilon^2}} 
\end{array}\right) \, ds\\\\
&= D^\beta K(t) \textbf{w}(0)+\mathcal{K}(t)D^\beta \textbf{w}(0)+R(t)D^\beta \textbf{w}(0) \\
&+\int_0^{t/2} \left(\begin{array}{c}
D^\beta K_{12}(t-s) \\
D^\beta K_{22}(t-s)
\end{array}\right)\dfrac{h(w_1(s))}{\varepsilon \sqrt{\lambda^2 - a^2 \varepsilon^2}}  \, ds \\
&+\int_{t/2}^t   \left(\begin{array}{c}
K_{12}(t-s)\\
K_{22}(t-s)
\end{array}\right) D^\beta \dfrac{h(w_1(s))}{\varepsilon \sqrt{\lambda^2 - a^2 \varepsilon^2}}  \, ds \\
&+\int_0^t \mathcal{K}(t-s)D^\beta \left(\begin{array}{c}
0 \\
\dfrac{h(w_1(s))}{\varepsilon \sqrt{\lambda^2 - a^2 \varepsilon^2}} 
\end{array}\right) \, ds
\end{align*}
\begin{align*}
\quad \;\; &+\int_0^{t/2} \left(\begin{array}{c}
D^\beta R_{12}(t-s) \\
D^\beta R_{22}(t-s)
\end{array}\right)\dfrac{h(w_1(s))}{\varepsilon \sqrt{\lambda^2 - a^2 \varepsilon^2}}  \, ds \\\\
&+\int_{t/2}^t   \left(\begin{array}{c}
R_{12}(t-s)\\
R_{22}(t-s)
\end{array}\right) D^\beta \dfrac{h(w_1(s))}{\varepsilon \sqrt{\lambda^2 - a^2 \varepsilon^2}}  \, ds. 
\end{align*}
Notice that, from (\ref{K_dritto_JinXin}), $K_{12}, K_{22}$ are of order $\varepsilon$ and $\varepsilon^2$ respectively, and the same holds for
\begin{align*}
&R_{12}=O(\varepsilon)(1+t)^{-3/2} e^{-(x-at)^2/ct}+O(\varepsilon)e^{-ct}+O(1)e^{-ct/\varepsilon^2},\\
&R_{22}=O(\varepsilon^2)(1+t)^{-2} e^{-(x-at)^2/ct}+O(\varepsilon^2)e^{-ct}+O(1)e^{-ct/\varepsilon^2}.
\end{align*}
From the assumptions above, $f(u)=f(w_1)=aw_1+h(w_1),$  where $h(w_1)=w_1^2 \tilde{h}(w_1)$  for some function $\tilde{h}(w_1).$
Thus, by using the estimates of Theorem \ref{Decay_estimates_JinXin}, and recalling that $\|\cdot\|_{m}=\|\cdot\|_{H^{m}(\mathbb{R})},$ for $m=0,1, 2,$ ($H^0=L^2$), we have, for $j=1,2,$
\begin{align*}
\|\textbf{w}(t)\|_{m} & \le C \min \{ 1, t^{-1/4}\} \|\textbf{w}_0\|_{L^1} + Ce^{-ct/\varepsilon^2}\|\textbf{w}_0\|_{m} \\
& + C \int_0^t \min\{1, (t-s)^{-3/4}\} (\|w_1^2 \tilde{h}(w_1)\|_{L^1}+ \|w_1^2 \tilde{h}(w_1)\|_m) \, ds \\
& + C \int_0^t e^{-c(t-s)} \|w_1^2 \tilde{h}(w_1)\|_m \, ds\\
& + C \int_0^t \dfrac{1}{\varepsilon} e^{-c(t-s)/\varepsilon^2} \|w_1^2 \tilde{h}(w_1)\|_m \, ds.
\end{align*}
For $m$ big enough, 
\begin{align*}
\|\textbf{w}(t)\|_{m} & \le C \min \{ 1, t^{-1/4}\} \|\textbf{w}_0\|_{L^1} + Ce^{-ct/\varepsilon^2}\|\textbf{w}_0\|_{m} \\
& + \int_0^t \min\{1, (t-s)^{-3/4}\} C(|w_1|_\infty)  \|w_1\|_m^2 \, ds \\
& + \int_0^t e^{-c(t-s)} C(|w_1|_\infty) \|w_1\|_m^2 \, ds \\
& + \int_0^t \dfrac{1}{\varepsilon} e^{-c(t-s)/\varepsilon^2} C(|w_1|_\infty) \|w_1\|_j^2 \, ds.
\end{align*}
From (\ref{change_CD_JinXin}), we recall that
$\textbf{w}=\left(\begin{array}{c}
w_1\\\\
w_2
\end{array}\right)
=\left(\begin{array}{c}
u\\\\
\dfrac{\varepsilon(v- au)}{\sqrt{\lambda^2-a^2\varepsilon^2}}
\end{array}\right),$
and so, for $m=2,$
\begin{align*}
\|u(t)\|_2 + c \varepsilon \|v(t)-au(t)\|_2  & \le C \min \{ 1, t^{-1/4}\} (\|{u}_0\|_{L^1}+c\varepsilon \|v_0-au_0\|_{L^1}) \\
&+ e^{-ct/\varepsilon^2}(\|{u}_0\|_{2}+c\varepsilon \|v_0-au_0\|_{2})  \\
& +  \int_0^t \min\{1, (t-s)^{-3/4}\} C(|u|_\infty)\|u\|_2^2 \, ds \\
& + \int_0^t e^{-c(t-s)} C(|u|_\infty) \|u\|_2^2 \, ds \\
& + \int_0^t \dfrac{1}{\varepsilon}e^{-c(t-s)/\varepsilon^2} C(|u|_\infty) \|u\|_2^2 \, ds.
\end{align*}
Let us denote by
\begin{equation}
E_m=\max\{\|u_0\|_{L^1}+\varepsilon \|v_0-au_0\|_{L^1}, \|u_0\|_m+\varepsilon \|v_0-au_0\|_m\},  
\end{equation}
where, according to (\ref{approx_initial_data_JinXin}), 
$v_0=f(u_0)-\lambda^2 \partial_x u_0,$
and
\begin{equation}
\label{M0_JinXin}
M_0(t)=\sup_{0\le \tau \le t} \{ \max\{1, \tau^{1/4}\} (\|u(\tau)\|_2+\varepsilon \|v(\tau)-au(\tau)\|_2)\}.
\end{equation}
The first term of the right hand ride of the estimate above gives
\begin{align*}
C \min \{ 1, t^{-1/4}\} (\|{u}_0\|_{L^1}+c\varepsilon \|v_0-au_0\|_{L^1})&+ Ce^{-ct/\varepsilon^2}(\|{u}_0\|_{2}+c\varepsilon \|v_0-au_0\|_{2})\\
& \le C \min \{1, t^{-1/4}\} E_2.
\end{align*}
Besides,
$$C(|u|_\infty) \|u\|_2^2 \le C(|u|_\infty) \min\{1, s^{-1/2}\} (M_0(s))^2.$$

Thus, 
\begin{align*}
\|u(t)\|_2+\varepsilon \|v(t)-au(t)&\|_2  \le  C \min \{1, t^{-1/4}\} E_2 \\
&+ (M_0(t))^2 \int_0^t e^{-c(t-s)} c(|u|_\infty) \min \{1, s^{-1/2}\} \, ds \\
&+ (M_0(t))^2 \int_0^t \dfrac{1}{\varepsilon} e^{-c(t-s)/\varepsilon^2} c(|u|_\infty) \min \{1, s^{-1/2}\} \, ds \\
& + (M_0(t))^2 \int_0^t c(|u|_\infty) \min \{1, (t-s)^{-3/4}\} \min \{1, s^{-1/2}\} \, ds.
\end{align*}
From the Sobolev embedding theorem,
$$c(|u(s)|_\infty) \le c(\|u(s)\|_2) \le C \min \{1, s^{-1/4}\} M_0(s) \le C M_0(s).$$
This way,
\begin{align*}
\|u(t)\|_2+\varepsilon \|v(t)-au(t)\|_2 & \le  C \min \{1, t^{-1/4}\} E_2 \\
&+C(M_0(t))^3 \int_0^t e^{-c(t-s)} \min \{1, s^{-1/2}\} \, ds \\
&+C(M_0(t))^3 \int_0^t \dfrac{1}{\varepsilon} e^{-c(t-s)/\varepsilon^2} \min \{1, s^{-1/2}\} \, ds \\
& + C(M_0(t))^3 \int_0^t \min \{1, (t-s)^{-3/4}\} \min \{1, s^{-1/2}\} \, ds.
\end{align*}
Notice that
\begin{align*}
\dfrac{1}{\varepsilon} \int_0^t e^{-c(t-s)/\varepsilon^2} \min\{1, s^{-1/2}\}  \, ds &= \varepsilon  e^{-ct/\varepsilon^2}\int_0^{t/\varepsilon^2} e^{c \tau} \min \{1, \varepsilon\sqrt{\tau}\} \, d\tau\\ 
& \le \varepsilon  e^{-ct/\varepsilon^2}\int_0^{t/\varepsilon^2} e^{c \tau} \, d\tau\\ 
& = \dfrac{\varepsilon}{c} [1-e^{-ct/\varepsilon^2}] \\
% & = \varepsilon \int_0^t  e^{-c(t-s)}  \, ds \\
& \le C\varepsilon.
\end{align*}

By using this inequality in the estimate above,
\begin{align*}
\|u(t)\|_2+\varepsilon \|v(t)-au(t)\|_2 & \le  C \min \{1, t^{-1/4}\} E_2 \\
&+ C(M_0(t))^3 \int_0^t e^{-c(t-s)} \min \{1, s^{-1/2}\} \, ds  \\
& + \varepsilon C(M_0(t))^3 \\
& + C(M_0(t))^3 \int_0^t \min \{1, (t-s)^{-3/4}\} \min \{1, s^{-1/2}\} \, ds.
\end{align*}

By applying usual lemmas on integration, as Lemma 5.2 in \cite{Bianchini},
we get the following inequality
\begin{align*}
M_0(t) \le  C(E_2 + (M_0(t))^3).
\end{align*}
\begin{remark}
Notice that this last estimate and the calculations above are different from \cite{Bianchini}, since here we are not assuming the global well-posedness of our model. Indeed we prove this in the following.
\end{remark}
Then, if $E_2$ is small enough,
$$M_0(t) \le C E_2, $$ i.e.
\begin{equation}
\label{s_estimate_JinXin}
\|u(t)\|_2+\varepsilon \|v(t)-au(t)\|_2 \le C \min\{1, t^{-1/4}\} E_2.
\end{equation}

By arguing as before and following \cite{Bianchini}, we have the theorem below.
\begin{proposition}
The following estimates hold, with $C$ a constant independent of $\varepsilon$,
\begin{equation}
\label{decay_estimate_beta_JinXin}
\|D^\beta \textbf{w}(t)\|_0 \le C \min \{1, t^{-1/4-|\beta|/2}\} E_{|\beta|+3/2},
\end{equation}
\begin{equation}
\label{decay_estimate_beta_dissipative_JinXin}
\|D^\beta w_2(t)\|_0 \le C \min \{1, t^{-3/4-|\beta|/2}\} E_{|\beta|+3/2},
\end{equation}
\begin{equation}
\label{decay_estimate_time_derivative_beta_JinXin}
\|D^\beta \partial_t \textbf{w} (t)\|_{0} \le C \min \{1, t^{-3/4-|\beta|/2}\} E_{|\beta|+5/2},
\end{equation}
\begin{equation}
\label{decay_estimate_time_derivative_dissipative_beta_JinXin}
\|D^\beta \partial_t w_2 (t)\|_{0} \le C \min \{1, t^{-5/4-|\beta|/2}\} E_{|\beta|+7/2}.
\end{equation}
\end{proposition}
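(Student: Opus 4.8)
The plan is to reproduce the fixed-point/bootstrap scheme that already yielded (\ref{s_estimate_JinXin}), now tracking the frequency localization of the Green function and inducting on the order $|\beta|$ of the spatial derivative, the base case $|\beta|=0$ being (\ref{s_estimate_JinXin}). First I would apply $D^\beta$ to the Duhamel representation (\ref{Duhamel}), insert the decomposition $\Gamma=K+\mathcal K+R$ of Lemma \ref{Decomposition_lemma}, and split each nonlinear time integral into $\int_0^{t/2}$ and $\int_{t/2}^t$, moving the derivative onto the kernel on the first half and onto the source on the second half, exactly as in the display preceding (\ref{s_estimate_JinXin}). Because the nonlinear source $\tilde N(\tilde w_1)$ has vanishing first component, only the second columns $K_{12},K_{22}$ (respectively $R_{12},R_{22}$, and the full $\mathcal K$) act on it, and by (\ref{K_dritto_JinXin}) and the displayed formulas for $R_{12},R_{22}$ these carry compensating factors $\varepsilon,\varepsilon^2$ that cancel the $1/(\varepsilon\sqrt{\lambda^2-a^2\varepsilon^2})$ in $\tilde N$. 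I would then bound $\|D^\beta\textbf{w}(t)\|_0$ by Theorem \ref{Decay_estimates_JinXin} for the linear part plus these nonlinear integrals, estimating the quadratic term $h(w_1)=w_1^2\tilde h(w_1)$ by $\|h(w_1)\|_{L^1}\le C(\|u\|_\infty)\|w_1\|_0^2$ and, through a Moser-type inequality, $\|D^\beta h(w_1)\|_0\le C(\|u\|_\infty)\|w_1\|_{|\beta|}^2$. Using the bounds $\min\{1,s^{-1/4-|\beta'|/2}\}$ already available for $|\beta'|<|\beta|$, the integration lemma (Lemma 5.2 in \cite{Bianchini}), and the elementary inequality $\frac1\varepsilon\int_0^t e^{-c(t-s)/\varepsilon^2}\min\{1,s^{-1/2}\}\,ds\le C\varepsilon$ used before, this closes into $M_{|\beta|}(t)\le C\big(E_{|\beta|+3/2}+M_{|\beta|}(t)^3\big)$ for the order-$|\beta|$ analogue $M_{|\beta|}$ of $M_0$ in (\ref{M0_JinXin}), hence (\ref{decay_estimate_beta_JinXin}) once $E_2$ is small, so that $\|u\|_\infty$ stays uniformly bounded; the number of derivatives consumed in the Moser step fixes the Sobolev index $|\beta|+3/2$.

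For (\ref{decay_estimate_beta_dissipative_JinXin}) I would run the same computation after applying the left projector $L_{-}$, now using the faster linear bounds (\ref{K_dritto_beta_estimates_Lmeno_JinXin}) and (\ref{remainder_total_beta_estimate_Lmeno}) and the fact that $L_{-}$ selects the rows $K_{22},R_{22}$, whose Fourier symbols carry the extra factor $|\xi|^2$, i.e.\ an additional $(1+t)^{-1}$ of time decay (and a further power of $\varepsilon$), while $\mathcal K$ contributes only the exponentially small $e^{-ct/\varepsilon^2}$. Together with (\ref{decay_estimate_beta_JinXin}) for $w_1$ this yields $\|D^\beta w_2(t)\|_0\le C\min\{1,t^{-3/4-|\beta|/2}\}E_{|\beta|+3/2}$, in fact with an extra factor $\varepsilon$ which can be discarded since it is not needed below.

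The time-derivative estimates (\ref{decay_estimate_time_derivative_beta_JinXin})--(\ref{decay_estimate_time_derivative_dissipative_beta_JinXin}) I would obtain by differentiating the Duhamel formula in $t$ and using $\partial_t\hat\Gamma(t,\xi)=E(i\xi)\hat\Gamma(t,\xi)$, so that $\partial_t K$ equals $K$ multiplied in Fourier by the bounded symbol $\lambda_1(i\xi)=-a\,i\xi-\lambda^2\xi^2+\varepsilon^2(\cdots)$ of Proposition \ref{Proposition_z_small}; near $\xi=0$ this is dominated by the transport part $-a\,i\xi$, which costs exactly one half-power of time decay uniformly in $\varepsilon$, while $\partial_t\mathcal K$ and $\partial_t R$ pick up factors $O(\varepsilon^{-2})$ that are absorbed by the accompanying $e^{-ct/\varepsilon^2}$ through the same elementary integral bound, all exponentials staying uniformly bounded in $\varepsilon$ by Remark \ref{remark_zeta_infinity}. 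For $\partial_t w_2$ one projects with $L_{-}$ and combines the half-power gained from $\partial_t$ with the full extra power already present in the dissipative block, giving $t^{-5/4-|\beta|/2}$. The boundary term $\tilde N(\tilde w_1(t))$ produced when $\partial_t$ hits the upper limit of $\int_0^t$ I would reabsorb using the dissipative equation in (\ref{CD_real_JinXin}) to trade $\varepsilon^{-1}h(w_1)$ for $\partial_t w_2$, $\varepsilon^{-1}\partial_x w_1$ and $\varepsilon^{-2}w_2$; this, together with the extra spatial differentiations it requires, raises the Sobolev index to $|\beta|+5/2$ and $|\beta|+7/2$. The step I expect to be the main obstacle is precisely this uniformity in $\varepsilon$: one must never estimate the singular source $\tilde N$ or the singular coefficients of (\ref{CD_real_JinXin}) in isolation, but always paired with the compensating $\varepsilon$-weights that the conservative--dissipative form forces into the relevant entries of the Green kernel, or with the relaxation factor $e^{-ct/\varepsilon^2}$; modulo this bookkeeping the argument is the one of \cite{Bianchini}.
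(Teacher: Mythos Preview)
Your treatment of (\ref{decay_estimate_beta_JinXin}) and (\ref{decay_estimate_beta_dissipative_JinXin}) matches the paper's: inductive bootstrap on the Duhamel formula, with the $\varepsilon$-weights in the second column of $K$ and $R$ compensating the $1/\varepsilon$ in $\tilde N$, and the $L_{-}$-projection supplying the extra half-power of decay for $w_2$.

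The gap is in the time-derivative estimates (\ref{decay_estimate_time_derivative_beta_JinXin})--(\ref{decay_estimate_time_derivative_dissipative_beta_JinXin}). Differentiating (\ref{Duhamel}) in $t$ produces the boundary term $\tilde N(\tilde w_1(t))$, whose second component $h(w_1)/(\varepsilon\sqrt{\lambda^2-a^2\varepsilon^2})$ is $O(1/\varepsilon)$. Your proposed trade via the second equation of (\ref{CD_real_JinXin}) rewrites this as $\partial_t w_2+\tfrac{\sqrt{\lambda^2-a^2\varepsilon^2}}{\varepsilon}\partial_xw_1-a\partial_xw_2+\tfrac{1}{\varepsilon^2}w_2$; but even granting the extra factor $\varepsilon$ in $w_2$ that you note, the pair $\varepsilon^{-1}\partial_xw_1$ and $\varepsilon^{-2}w_2$ is still $O(1/\varepsilon)$, so the singularity is displaced, not removed. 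The same difficulty hits the linear piece: for small $t$ the factor $\varepsilon^{-2}e^{-ct/\varepsilon^2}$ coming from $\partial_t\mathcal K(t)\textbf{w}_0$ is not uniformly bounded, and no integral bound helps since there is no time integration in that term. The paper proceeds differently: it applies Duhamel directly to $\partial_t\textbf{w}$, which satisfies the same linear system with source $\partial_t\tilde N$ and initial datum $(\partial_t\textbf{w})(0)$. The decisive observation---made explicit in the Remark following the Proposition---is that the well-prepared data (\ref{approx_initial_data_JinXin}) force $(\partial_t\textbf{w})(0)$ to be \emph{uniformly bounded in $\varepsilon$}: a direct computation using (\ref{relation_initial_data_JinXin}) gives $\partial_tw_1|_{t=0}=-\partial_xf(\bar u_0)+\lambda^2\partial_{xx}\bar u_0$ and $\partial_tw_2|_{t=0}=\tfrac{a\varepsilon}{\sqrt{\lambda^2-a^2\varepsilon^2}}(\partial_xf(\bar u_0)-\lambda^2\partial_{xx}\bar u_0)$, both nonsingular. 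With this in hand the argument for $\partial_t\textbf{w}$ runs exactly as for $\textbf{w}$, the extra derivatives on $\bar u_0$ accounting for the raised Sobolev indices. Well-preparedness of the data is thus the mechanism that makes the time-derivative bounds uniform in $\varepsilon$, and it does not appear in your proposal.
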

This last result and estimate (\ref{s_estimate_JinXin}) prove Theorem \ref{uniform_global_existence}.
\begin{remark}
Notice that the estimates for the partial derivative in time of the solution (\ref{decay_estimate_time_derivative_beta_JinXin}), (\ref{decay_estimate_time_derivative_dissipative_beta_JinXin}) are uniform in $\varepsilon$ thanks to the particular form of the initial data (\ref{approx_initial_data_JinXin_f}). In fact, these estimates can be obtained by applying again the Duhamel formula as before and, similarly to (\ref{s_estimate_JinXin}), we get a bound for $\|D^\beta \partial_t \textbf{w} (t)\|_{0}$ which depends on $\|D^\beta \partial_t \textbf{w}|_{t=0}\|$. This norm is not singular in $\varepsilon$ thanks to the particular form of the initial data, as it is shown below.
The initial data satisfy (\ref{approx_initial_data_JinXin}), i.e.
$$v_0=f(u_0)-\lambda^2 \partial_x u_0=a u_0 + h(u_0) - \lambda^2\partial_x u_0.$$
In terms of the (C-D)-variable $\textbf{w},$ 
$$\begin{cases}
u=w_1, \\
v=aw_1+\dfrac{\sqrt{\lambda^2-a^2\varepsilon^2}}{\varepsilon}w_2,
\end{cases}$$
this gives the following relation:
\begin{equation}
\label{relation_initial_data_JinXin}
\dfrac{\sqrt{\lambda^2-a^2\varepsilon^2}}{\varepsilon}w_2^0=h(w_1^0)-\lambda^2 \partial_x w_1^0
\end{equation}
Using (\ref{relation_initial_data_JinXin}) in system (\ref{CD_real_JinXin}), this yields
$$\begin{cases}
\partial_t w_1|_{t=0}=-a\partial_x w_1^0 - \dfrac{\sqrt{\lambda^2-a^2\varepsilon^2}}{\varepsilon}\partial_x w_2^0, \\
\partial_t w_2|_{t=0}=- \dfrac{\sqrt{\lambda^2-a^2\varepsilon^2}}{\varepsilon}\partial_x w_1^0 + a\partial_x w_2^0+\dfrac{\lambda^2}{\varepsilon \sqrt{\lambda^2-a^2\varepsilon^2}}\partial_x w_1^0 \\
=\dfrac{a^2 \varepsilon}{\sqrt{\lambda^2-a^2\varepsilon^2}}\partial_x w_1^0+a\partial_x w_2^0.
\end{cases}$$
In terms of the original variable,
$$\begin{cases}
\partial_t w_1|_{t=0}=-\partial_x f(\bar{u}_0)+\lambda^2 \partial_{xx}\bar{u}_0, \\
\partial_t w_2|_{t=0}=\dfrac{a\varepsilon}{\sqrt{\lambda^2-a^2\varepsilon^2}}(\partial_x f (\bar{u}_0)-\lambda^2 \partial_{xx}\bar{u}_0).
\end{cases}$$
\end{remark}

\paragraph{Convergence in the diffusion limit and asymptotic behavior}
We perform the one dimensional Chapman-Enskog expansion. Recalling that
$$w_1=u, \qquad w_2=u_d,$$
where $u$ is the conservative variable and $u_d$ is the dissipative one, system (\ref{CD_real_JinXin}) is
$$\partial_t \left(\begin{array}{c}
u \\
u_d
\end{array}\right) + A \partial_x \left(\begin{array}{c}
u \\
u_d
\end{array}\right) = \left(\begin{array}{c}
0 \\
q(u)
\end{array}\right),$$
with $A$ in (\ref{matrices_CD_real_JinXin}) and $q(u)=-\dfrac{w_2}{\varepsilon^2}+\dfrac{h(w_1)}{\varepsilon \sqrt{\lambda^2-a^2\varepsilon^2}}.$
We consider the following nonlinear parabolic equation
\begin{equation*}
\partial_t u + a \partial_x u + \partial_x h(u) -(\lambda^2-a^2 \varepsilon^2) \partial_{xx} u=\partial_x S,
\end{equation*}
where 
\begin{equation}
\label{S_estimate}
S=\varepsilon \sqrt{\lambda^2-a^2\varepsilon^2}\{\partial_t u_d - a \partial_x u_d \}.
\end{equation}
The homogeneous equation is
\begin{equation}
\label{CE_JinXin}
\partial_t w_p + a \partial_x w_p + \partial_x h(w_p) -(\lambda^2-a^2 \varepsilon^2) \partial_{xx} w_p=0,
\end{equation}
and associated Green function is provided here
$$\Gamma_p(t)=K_{11}(t)+\tilde{\mathcal{K}}(t)+\tilde{R}(t),$$
with $K_{11}$ in (\ref{K_dritto_JinXin}). We take the difference between the conservative variable $u=w_1$ and $w_p$,
\begin{equation}
\label{difference}
\begin{aligned}
D^\beta (u(t)-w_p(t)) & = \int_0^{t/2}  D^\beta D (K_{11}(t-s)+\tilde{R}(t-s)) (h(w_p(s))-h(u(s))) \, ds \\
& + \int_0^{t/2} D^\beta D(K_{11}(t-s) + \tilde{R}(t-s)) S(s) \, ds \\
& + \int_{t/2}^t D(K_{11}(t-s)+\tilde{R}(t-s))D^\beta (h(w_p(s))-h(u(s))+S(s)) \, ds \\
& + \int_0^t \tilde{\mathcal{K}}(t-s) D^\beta D(h(w_p(s))-h(u(s))+S(s)) \, ds.
\end{aligned}
\end{equation}
By using (\ref{decay_estimate_beta_JinXin}), (\ref{decay_estimate_beta_dissipative_JinXin}), (\ref{decay_estimate_time_derivative_dissipative_beta_JinXin}), 
 we have
$$\|D^\beta S \|_0 \le C \varepsilon \min \{1, t^{-5/4-\beta/2}\} E_{\beta+1}.$$
Let us define, for $\mu \in [0, 1/2),$
\begin{equation}
\label{m_0_JinXin}
m_0(t)=\sup_{ \tau \in [0, t] } \{ \max \{1, \tau^{1/4+\mu} \} \|u(\tau)-w_p(\tau)\|_0 \}.
\end{equation}
For $\beta=0,$
\begin{align*}
\|u(t)-w_p(t)\|_0 & \le C E_1 m_0(t) \int_0^t \min \{1, (t-s)^{-3/4}\} \min \{1, s^{-1/2-\mu}\} \, ds \\
& + C \varepsilon E_3 \int_0^t \min \{1, (t-s)^{-3/4}\} \min \{1, s^{-1} \} \, ds \\
& +  {C(E_1 m_0(t) + {\varepsilon}E_4)}\int_0^t e^{-c(t-s)} \min \{1, s^{-5/4}\} \, ds \\
& \le C \min \{1, s^{-1/4-\mu} \} ( E_1 m_0(t) + \varepsilon E_1 + \varepsilon E_4+(1/2-\mu)^{-1} E_1 m_0(t)),
\end{align*}
i.e., if $E_1$ is small enough,
\begin{equation}
\label{m_0_estimate}
m_0(t) \le C \varepsilon E_4.
\end{equation}

Similarly, it can be proved by induction that if, for $\gamma < \beta,$ defining
\begin{equation}
\label{m_beta2_JinXin}
m_\beta (t)=\sup_{ \tau \in [0, t] } \{ \max \{1, \tau^{1/4+\mu+\beta/2} \} \|D^\beta (u(\tau)-w_p(\tau))\|_0 \},
\end{equation}
and assuming 
$m_\gamma (t) \le C(\mu) \varepsilon E_{\gamma+4},$
then
$$\|D^\beta (h(u(s))-h(w_p(s)))\|_{0} \le C \min \{1, t^{-1/2-\mu-\beta/2}\} (C(\mu)E_{\beta+1}E_{\beta+3}+E_1m_\beta(t)).$$
Using this inequality, (\ref{S_estimate}) and (\ref{m_0_estimate}) in (\ref{difference}), finally we get
$$m_\beta (t) \le C(\mu) \varepsilon E_{\beta+4},$$
which ends the proof of Theorem \ref{asymptotic_behavior_JinXin}.

\section*{Acknowledgments}
The author is grateful to Roberto Natalini for some helpful advices and encouragements, and to Enrique Zuazua for useful discussions.

\bibliographystyle{siamplain}
\bibliography{references}

\end{document}